\def\Wdn#1\wdn{\marginpar{\tiny #1}}
\long\def\WDN#1\wdn{[WDN: #1]\Wdn[Comment]\wdn}
\sodef\spred{}{.2em}{.9em plus.4em}{1em plus.1em minus.1em}
\newbox\mybox
\def\overtag#1#2#3{\setbox\mybox\hbox{$#1$}\hbox to
  0pt{\vbox to 0pt{\vglue-#3\vglue-\ht\mybox\hbox to \wd\mybox
      {\hss$\ss#2$\hss}\vss}\hss}\box\mybox}
\def\undertag#1#2#3{\setbox\mybox\hbox{$#1$}\hbox to 0pt{\vbox to
    0pt{\vglue#3\vglue\ht\mybox\hbox to \wd\mybox
      {\hss$\ss#2$\hss}\vss}\hss}\box\mybox}
\def\lefttag#1#2#3{\hbox to 0pt{\vbox to 0pt{\vss\hbox to
      0pt{\hss$\ss#2$\hskip#3}\vss}}#1}
\def\righttag#1#2#3{\hbox to 0pt{\vbox to 0pt{\vss\hbox to
      0pt{\hskip#3$\ss#2$\hss}\vss}}#1}
\let\ss\scriptstyle
\def\Dot{\lower.2pc\hbox to 2.5pt{\hss$\bullet$\hss}}
\def\Circ{\lower.2pc\hbox to 2.5pt{\hss$\circ$\hss}}
\def\Vdots{\raise5pt\hbox{$\vdots$}}
\def\splicediag#1#2{\xymatrix@R=#1pt@C=#2pt@M=0pt@W=0pt@H=0pt}
\newcommand\lineto{\ar@{-}}
\newcommand\dashto{\ar@{--}}
\newcommand\dotto{\ar@{.}}
\newtheorem{thm}{Theorem}[section]
\newtheorem{prop}[thm]{Proposition}
\newtheorem{cor}[thm]{Corollary}
\newtheorem{thm*}{Theorem}
\theoremstyle{definition} 
\newtheorem{defn}[thm]{Definition} 
\newtheorem{rmk}[thm]{Remark}
\newcommand{\Q}{\mathbbm{Q}}
\newcommand{\Z}{\mathbbm{Z}}
\newcommand{\num}[1]{\lvert #1 \rvert}
\newcommand{\lcm}{\operatorname{lcm}}
\DeclareMathOperator*{\conect}{\#}
\newcommand{\morf}[4][\to]{ #2 \colon #3 #1 #4}
\newcommand{\inv}{^{-1}}
\newcommand\@b@gconect[1]{%
\vcenter{\hbox{#1$\m@th\mkern2mu\conect\mkern2mu$}}}
\newcommand\@bigconect{%
\mathchoice{\@b@gconect\huge} % display style
{\@b@gconect\LARGE} % text style
{\@b@gconect{}} % script style
{\@b@gconect\footnotesize} % script script style
}
\newcommand\bigconect{\mathop{\@bigconect}\displaylimits}
\renewcommand{\phi}{\varphi}
\renewcommand{\epsilon}{\varepsilon}
\begin{document}

\bibliographystyle{alpha}

%\tableofcontents

\title[When is the universal abelian cover a $\Q$HS]
{Determining when the universal abelian cover of a graph manifold is a
  rational homology sphere.}
\author{Helge M\o{}ller Pedersen}
\address{Matematische Institut\\ Universität Heidelberg
\\ Heidelberg, 69120}
\email{pedersen@mathi.uni-heidelberg.de}
\keywords{rational homology sphere,
abelian cover}
\subjclass[2000]{57M10, 57M27}
\begin{abstract}
  It was shown in \cite{myarticle} that the splice diagram of a
  rational homology sphere graph manifold determines the manifolds
  universal abelian cover. In this article we use the proof given in
  \cite{myarticle} to give a condition on the splice diagram to
  determine when the universal abelian cover itself is a rational
  homology sphere.
\end{abstract}
\maketitle

\section{Introduction}

Graph manifolds is an interesting class of $3$-manifolds. They are
defined as the manifolds who only have Seifert fibered pieces in their
JSJ-decomposition, or equivalently have no hyperbolic pieces in their
geometric decomposition. They are also the $3$-manifolds that are
boundary of plumbed $4$-manifolds, and therefore all links of
isolated complex surface singularities are graph manifolds. 

If we restrict to rational homology sphere graph manifolds, then there are
interesting question involves the universal abelian cover. The first is
of course when do two manifolds have the same universal abelian cover. A answer
to this was given in \cite{myarticle} using an
invariant called splice diagram, saying that if two graph manifolds
have the same splice diagram, then their universal abelian covers are
homeomorphic. There I gave a simple corollary:

%\section{Determining when the universal abelian cover is a rational
%  homology sphere}

%In this chapter we will give some corollaries of the proof of the
%second main theorem. We determine from the splice
%diagram when the universal abelian cover is a rational or an integer
%homology sphere. We begin by recognizing manifolds with
%integer homology sphere universal abelian covers.

\begin{cor}
Let $M$ be a rational homology sphere graph manifold with splice
diagram $\Gamma(M)$, such that around any node in $\Gamma(M)$ the
edge weights are pairwise coprime. Then the universal abelian cover of
$M$ is an integer homology sphere.
\end{cor}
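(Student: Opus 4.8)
The plan is to reduce the statement to the (trivial) behaviour of the universal abelian cover on integer homology spheres, and then transport the conclusion back to $M$ via the main theorem of \cite{myarticle}. The key observation is that the hypothesis on $\Gamma(M)$ is exactly the arithmetic condition that characterizes the splice diagrams of \emph{integer} homology sphere graph manifolds. So rather than computing the homology of the cover of $M$ directly, I would produce a $\Z$HS with the same splice diagram and quote that its universal abelian cover is trivial.

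Concretely, I would first recall the realization theory of Eisenbud and Neumann. Each node of $\Gamma(M)$ corresponds to a Seifert fibered piece whose exceptional-fiber multiplicities are the edge weights emanating from that node; pairwise coprimality of these weights is precisely the condition under which that Seifert piece can be chosen to be a Seifert fibered \emph{integer} homology sphere (a Brieskorn-type sphere $\Sigma(\alpha_1,\dots,\alpha_n)$). Since $\Gamma(M)$ is the splice diagram of the genuine manifold $M$, the remaining structural constraints on a splice diagram (positivity of the weights and the edge-determinant inequalities) already hold. Assembling the chosen homology-sphere Seifert pieces by splicing along the edges of $\Gamma(M)$ then yields a graph manifold $N$ with $\Gamma(N)=\Gamma(M)$; and because splicing integer homology spheres along tori again produces an integer homology sphere (a Mayer--Vietoris computation), $N$ is a $\Z$HS.

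The rest is immediate. As $H_1(N;\Z)=0$, the cover associated to $\pi_1(N)\twoheadrightarrow H_1(N;\Z)$ is trivial, so the universal abelian cover of $N$ is $N$ itself. Applying the theorem of \cite{myarticle} to the equality $\Gamma(M)=\Gamma(N)$, the universal abelian cover of $M$ is homeomorphic to that of $N$, hence to $N$, and therefore is an integer homology sphere. I would emphasize that this does \emph{not} claim $M$ is a $\Z$HS: the coprimality hypothesis constrains the Seifert pieces, but the gluings realizing $M$ may differ from the splice gluings realizing $N$, so $M$ and $N$ can have different first homology while sharing the same splice diagram --- which is exactly the phenomenon the main theorem exploits.

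The step I expect to be the main obstacle is the realization claim. One must verify carefully that pairwise coprimality of the edge weights, together with the constraints already guaranteed by the existence of $M$, genuinely places $\Gamma(M)$ in the class of diagrams carried by integer homology spheres, and that the Seifert pieces can be assembled with Euler numbers and orientations making $N$ a $\Z$HS rather than a manifold with torsion in $H_1$. In practice I would isolate this as a separate lemma identifying the splice diagrams of $\Z$HS graph manifolds by the pairwise-coprimality condition, and then invoke it directly in the argument above.
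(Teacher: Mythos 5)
Your proposal is correct and takes essentially the same route as the paper's own argument: realize $\Gamma(M)$ by an integer homology sphere $N$ (via the Eisenbud--Neumann classification of splice diagrams with pairwise coprime weights around nodes), note that the universal abelian cover of $N$ is $N$ itself, and invoke the main theorem of \cite{myarticle} to identify the universal abelian cover of $M$ with $N$. The realization step you flag is indeed the crux, and it is handled there exactly as you propose, by quoting Eisenbud--Neumann's existence theorem (with the node signs absorbed by choosing orientations of the Seifert fibered homology sphere pieces).
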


The present article will strengthen this result and answer when is the
universal abelian cover a rational homology sphere. We are going to do
this, by investigating the construction of the universal abelian cover
from the splice diagram given in \cite{myarticle} see also
\cite{constructinabcovers} for this construction in more algorithmic
form. 

The splice diagram we use differs slightly from the original
definition given in \cite{Siebenmann} and \cite{EisenbudNeumann} by
only having non negative weights at edges and not demanding that the
edges at a node are pairwise coprime, the last is of course because we
are working with rational homology spheres and not only
integer homology spheres. Our splice diagram also differ slightly
from the once in \cite{NeumannWahl3}, \cite{neumannandwahl1}, and
\cite{neumannandwahl2}, by having signs at nodes, but for singularity
links which is what concern Neumann and Wahl in those articles, our
splice diagram are the same. 

This article has two sections. In the first we introduce splice
diagrams and and give some result about them need in the second
section where we prove a condition for when the universal abelian
cover is a rational homology sphere. This result was originally partly
in my Ph.d.\ thesis, but I was at that time not able to prove what
here is Proposition \ref{intersectionform}, and could therefore only
show sufficiency of the condition in the case of singularity links,
using that finite branched covers only branched over the singular
point of singularity links are themselves singularity links. Because 
this was originally part of my thesis I would like to thank Ben Elias
who helped me editing my thesis and my Ph.d.\ advisor Walter Neumann.

%\begin{proof}
%It is shown in \cite{EisenbudNeumann} that a splice diagram with
%pairwise coprime edge weights at nodes is the
%splice diagram for an integer homology sphere.
% So given a splice
%diagram satisfying the assumption there is a integer homology sphere
%$M'$ with splice diagram $\Gamma(M)$, hence $M'$ is the universal
%abelian cover of $M$ by Theorem \ref{universalabcover}. 
%\end{proof}   

%This actually gives a way to construct the universal abelian cover for any
%rational homology sphere graph manifold  that
%has a splice diagram with 
%pairwise coprime edge weights at nodes, since \cite{EisenbudNeumann}
%describes how to construct the integral homology sphere by splicing,
%and to construct a plumbing diagram for it in the case where we have
%positive decorations at nodes. If we want a construction of the
%plumbing diagram when there are
%negative decorations at nodes, Theorem 3.1 in
%\cite{bilinearforms} contains a method whereby given a splice diagram
%$\Gamma(M)$ as above, one can construct an unimodular tree
%$\Delta(M)$ which will be the plumbing diagram.

\section{Splice diagrams}

A \emph{splice diagram} is a weighted tree with no vertices of valence
two, with signs on \emph{nodes}, that is vertices of valence tree or
higher, and with non negative integers on edges adjacent to nodes.
$$\splicediag{8}{30}{
  \Circ&&&&\Circ&\\
  &\oplus  \lineto[ul]_(.25){3}
  \lineto[dl]^(.25){5}
  \lineto[rr]^(.25){22}^(.75){10}&& \ominus
  \lineto[ur]^(.25){7}
  \lineto[dr]_(.25){2}^(.75){6}&&\Circ\\
  \Circ&&&&\oplus\lineto[ur]^(.25){3}\lineto[dr]_(.25){2}&\\
&&&&&\Circ
\hbox to 0 pt{~,\hss} }$$

We want to assign a splice diagram $\Gamma(M)$ to any given $\Q$HS
graph manifold $M$, this is done in the following way:
\begin{itemize}
\item Take a vertex for each of the Seifert fibered pieces of the
  JSJ-decomposition of $M$, these are the vertices there are going to
  be the nodes of $\Gamma(M)$ and we will hence forward not
  distinguish between a node and the Seifert fibered piece it
  represents.
\item Connect two nodes by an edge if they are they are glued to make
  $M$ from the pieces.
\item Add a vertex and connect it with an edge to a node for for each
  singular fiber of the node, we will call these vertices for leaf, and
  will in general not distinguish between a leaf and the edge leading
  to it.
\item The sign added at a node is the linking number of two non
  singular fibers, see \cite{myarticle} for more details.
\item If $e$ is an edge at a node $d$, the it corresponds to a torus
  $T_e\subset M$, either from the JSJ-decomposition or as the boundary
  of a tubular neighborhood of a singular fiber. Let $M_{ve}'$ be the
  connected piece of $M-T_e$ not containing $v$, and let
  $M_{ve}=M_{ve}'\bigcup(D^2\times S^1)$, where we identify a meridian
  of the solid torus with the image of a fiber of $v$. Then the edge
  weight $d_{ve}$ is $\num{H_1(M_{ve})}$, if $H_1(M_{ve})$ is infinite
  $d_{ve}=0$.  
\end{itemize}

The assumption that $M$ is a $\Q$HS insure that this construction
gives a tree, since the decomposition structure (or decomposition
graph see \cite{commensurability}) of a $\Q$HS is a tree, a fact which
will be used later when we find obstructions to the universal abelian
cover being a $\Q$HS. If we consider $M$ as a plumbed manifold, then
we can give the following characterization of being a $\Q$HS.

\begin{prop}\label{rationalhomologyspheres}
Let $M$ be a plumbed $3$-manifold, then $M$ is a $\Q$HS is and only
if its plumbing diagram is a tree of spheres, and its intersection
form is non degenerate.
\end{prop}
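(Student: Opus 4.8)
The plan is to realize $M$ as the boundary of the plumbed $4$-manifold $W$ associated to the plumbing diagram and to read off $b_1(M)$ from the long exact sequence of the pair $(W,M)$. Since $M$ is a closed oriented $3$-manifold, Poincar\'e duality gives $b_2(M)=b_1(M)$, so $M$ is a $\Q$HS precisely when $b_1(M)=0$; thus the whole statement reduces to computing $b_1(M)$ and showing it vanishes exactly under the two stated conditions.

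First I would determine the rational homology of $W$. The manifold $W$ deformation retracts onto the union $X$ of the zero sections of the plumbed disk bundles, i.e.\ onto one genus-$g_i$ surface $\Sigma_{g_i}$ per vertex, these being glued transversally at one point for each edge of the diagram $G$. A count of Euler characteristics then gives $H_2(W;\Q)\cong\Q^{n}$, spanned by the fundamental classes of the surfaces (here $n$ is the number of vertices), together with $H_3(W;\Q)=0$ and $b_1(W)=b_1(G)+2\sum_i g_i$, where $b_1(G)$ is the number of independent cycles of the graph.

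Next I would feed this into the long exact sequence of $(W,M)$ with $\Q$ coefficients. Lefschetz duality identifies $H_2(W,M;\Q)\cong H^2(W;\Q)\cong\Q^{n}$ and $H_1(W,M;\Q)\cong H^3(W;\Q)=0$, and it identifies the inclusion-induced map $H_2(W;\Q)\to H_2(W,M;\Q)$ with the intersection form $A$ of the plumbing. The relevant segment then reads
$$\Q^{n}\xrightarrow{\ A\ }\Q^{n}\to H_1(M;\Q)\to H_1(W;\Q)\to 0,$$
and exactness yields
$$b_1(M)=\dim_\Q\operatorname{coker}(A)+b_1(W)=\bigl(n-\rank A\bigr)+b_1(G)+2\sum_i g_i.$$

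Finally I would observe that all three summands are non-negative, so $b_1(M)=0$ if and only if each vanishes separately: $2\sum_i g_i=0$ forces every vertex to have genus $0$ (all spheres), $b_1(G)=0$ together with connectedness of $G$ forces the diagram to be a tree, and $n-\rank A=0$ says exactly that $A$ is non-degenerate. Combining these with the reduction in the first paragraph gives the claim. The main obstacle I anticipate lies in the bookkeeping of the second and third steps: computing $b_1(W)$ correctly for arbitrary genera and graph cycles, and verifying carefully that the map $H_2(W)\to H_2(W,M)$ is genuinely given by the intersection matrix $A$ and not merely related to it.
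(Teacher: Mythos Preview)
The paper does not actually prove this proposition; it is stated without proof and used as a known background fact (and indeed the characterization is classical, going back to Neumann's plumbing calculus). So there is nothing in the paper to compare your argument against.

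That said, your argument is correct and is essentially the standard proof of this result. The computation $b_1(W)=b_1(G)+2\sum_i g_i$ via the deformation retraction of $W$ onto the wedge-like $2$-complex of core surfaces is right (an Euler-characteristic count confirms it, using $b_0(W)=1$, $b_2(W)=n$, $b_3(W)=0$). Lefschetz duality gives $H_1(W,M;\Q)\cong H^3(W;\Q)=0$ and $H_2(W,M;\Q)\cong H^2(W;\Q)\cong\Q^n$, and the identification of the map $H_2(W;\Q)\to H_2(W,M;\Q)\cong H^2(W;\Q)$ with the intersection form is the standard cap-product/Lefschetz-duality statement. The resulting formula $b_1(M)=(n-\rank A)+b_1(G)+2\sum_i g_i$ then cleanly splits into three non-negative summands, each of which must vanish, giving exactly the three conditions. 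The caveat you flag---checking that the connecting map really is given by the intersection matrix $A$---is the only point requiring care, but it is a standard and well-documented fact for compact oriented manifolds with boundary.
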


One can construct the splice diagram of $M$ from a plumbing diagram of
$M$, see \cite{myarticle} for details. This is in fact the way splice
diagrams are defined in \cite{NeumannWahl3},
\cite{neumannandwahl2}, and \cite{neumannandwahl1}.

From the splice diagram there are several numerical invariants that
plays important roles. The first is the \emph{edge determinant} which
is a number associated to an edge $e$ between nodes $v_0$ and $v_1$, if
the nodes look like
$$\splicediag{8}{30}{
  &&&&\\
  \Vdots&\overtag\Circ {v_0} {8pt}\lineto[ul]_(.5){n_{01}}
  \lineto[dl]^(.5){n_{0k_0}}
  \lineto[rr]^(.25){r_0}^(.75){r_1}&& \overtag\Circ{v_1}{8pt}
  \lineto[ur]^(.5){n_{11}}
  \lineto[dr]_(.5){n_{1k_1}}&\Vdots\\
  &&&&\hbox to 0 pt{~.\hss} }$$ 
then it is defined as 
\begin{align*}
r_0r_1- \epsilon_0\epsilon_1
\big(\prod_{i=1}^{k_0}n_{oi}\big)\big(\prod_{j=1}^{k_1}n_{1j}\big),
\end{align*}
where $\epsilon_i$ is the sign on the $i$'th node.

The edge determinant is important in the following two result from
\cite{myarticle}

\begin{prop}[Edge Determinant Equation]\label{edgedeterminantequation}
Let $e$ be an edge between two nodes $v$ and $w$, let $D(e)$ be its edge
determinant. then the fiber intersection number $p$ in the
corresponding torus is given by
\begin{align*}
p&=\frac{\num{D}}{\num{H_1(M)}}.
\end{align*}
\end{prop}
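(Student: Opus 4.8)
The plan is to cut $M$ along the torus $T_e$ and compute $\num{H_1(M)}$ by Mayer--Vietoris, reading off $p$, the edge weights $r_0,r_1$, and the products of leaf weights directly from the two sides. Write $M = M_1\cup_{T_e}M_2$ with $v\in M_1$ and $w\in M_2$ (matching the node labels $v_0,v_1$ of the edge-determinant picture). Since $M$ is a $\Q$HS we have $H_2(M)=0$ and $\tilde H_0(T_e)=0$, so the sequence collapses to
\[
H_1(M)\;\cong\;\operatorname{coker}(\psi),\qquad \morf{\psi}{H_1(T_e)}{H_1(M_1)\oplus H_1(M_2)},\quad c\mapsto (i_{1*}c,-i_{2*}c),
\]
and $H_2(M)=0$ forces $\psi$ to be injective. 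As each summand $H_1(M_i)$ has rank one, $\psi$ is an injection of rank-two groups with finite cokernel of order $\num{H_1(M)}$.

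On $T_e$ the two fibers $f_0$ (fiber of $v$) and $f_1$ (fiber of $w$) satisfy $\num{f_0\cdot f_1}=p$ by definition of the fiber intersection number, so they span a sublattice $L=\Z f_0\oplus\Z f_1\subseteq H_1(T_e)$ of index exactly $p$; the non-degeneracy supplied by Proposition \ref{rationalhomologyspheres} guarantees $p\neq 0$. For an injection of $\Z^2$ into a rank-two group with finite cokernel, restricting to an index-$p$ sublattice multiplies the order of the cokernel by $p$, whence
\[
p\,\num{H_1(M)}\;=\;\num{\operatorname{coker}(\psi|_L)}.
\]
It therefore remains to evaluate the cokernel of $L\to H_1(M_1)\oplus H_1(M_2)$ sending $f_0\mapsto(i_{1*}f_0,-i_{2*}f_0)$ and $f_1\mapsto(i_{1*}f_1,-i_{2*}f_1)$.

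Here I would invoke the construction of the edge weights from \cite{myarticle}. Capping $M_1$ with a solid torus whose meridian is $f_1$ gives $\num{H_1(M_1)/\langle i_{1*}f_1\rangle}=r_1$, and capping $M_2$ along $f_0$ gives $\num{H_1(M_2)/\langle i_{2*}f_0\rangle}=r_0$; these are precisely the weights $d_{ve}$ in the definition. Thus the two entries $i_{1*}f_1$ and $i_{2*}f_0$ of the $2\times 2$ block carry the orders $r_1$ and $r_0$, and their contribution to $\num{\operatorname{coker}(\psi|_L)}$ is the ``diagonal'' product $r_0r_1$. The remaining entries $i_{1*}f_0$ and $i_{2*}f_1$ are the fiber classes of the two nodes; using the Seifert presentations of $H_1(M_1)$ and $H_1(M_2)$ near $T_e$ — in which the regular fiber, a base section, and the orders $n_{0i},n_{1j}$ of the singular fibers appear explicitly, and in which the signs $\epsilon_0,\epsilon_1$ record the orientations of the two fibrations — their interaction produces the ``off-diagonal'' term $\epsilon_0\epsilon_1\big(\prod_i n_{0i}\big)\big(\prod_j n_{1j}\big)$. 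A Smith-normal-form computation of the $2\times 2$ block then gives $\num{\operatorname{coker}(\psi|_L)}=\num{r_0r_1-\epsilon_0\epsilon_1\big(\prod_i n_{0i}\big)\big(\prod_j n_{1j}\big)}=\num{D}$, which with the previous display yields $p=\num{D}/\num{H_1(M)}$.

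The Mayer--Vietoris collapse and the index/cokernel bookkeeping are routine; the hard part will be the last paragraph, namely matching the local Seifert data at the two nodes to the global images $i_{k*}f_0,i_{k*}f_1$ and verifying that the off-diagonal contribution is exactly $\epsilon_0\epsilon_1\prod_i n_{0i}\prod_j n_{1j}$, with the correct sign. An alternative that avoids the boundary bookkeeping is to pass to a plumbing presentation: by Proposition \ref{rationalhomologyspheres} one has $\num{H_1(M)}=\num{\det A}$ for the intersection matrix $A$, the weights $r_0,r_1$ are the determinants of the sub-plumbing matrices obtained by severing the chain representing $e$, and a continuant (transfer-matrix) expansion of $\det A$ along that chain reproduces the identity $r_0r_1-\epsilon_0\epsilon_1\prod_i n_{0i}\prod_j n_{1j}=p\,\det A$. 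In that route the obstacle is instead the dictionary between the splice-diagram weights and the sub-plumbing determinants.
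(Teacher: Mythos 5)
A preliminary remark: the paper never proves this proposition itself --- it is imported verbatim from \cite{myarticle} --- so there is no internal proof to compare against; I am judging your argument on its own merits.

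Your framework is correct and would, if completed, give a genuinely self-contained proof: the Mayer--Vietoris collapse (using $H_2(M)=0$ for injectivity of $\psi$), the rank count making each $H_1(M_i)$ of rank one, the index bookkeeping $\num{\operatorname{coker}(\psi|_L)}=p\,\num{H_1(M)}$, and the identification $\num{H_1(M_1)/\langle i_{1*}f_1\rangle}=r_1$, $\num{H_1(M_2)/\langle i_{2*}f_0\rangle}=r_0$, which is literally the paper's definition of the edge weights $d_{ve}$. The gap sits exactly where you flag it, and it is twofold. First, the off-diagonal identification $\num{H_1(M_1)/\langle i_{1*}f_0\rangle}=\prod_i n_{0i}$ cannot be read off from ``the Seifert presentation of $H_1(M_1)$'': in general $M_1$ is not Seifert fibered, since the edges at $v_0$ other than $e$ may lead to entire subtrees, in which case $n_{0i}$ is by definition $\num{H_1}$ of a whole capped-off branch manifold, not the multiplicity of any singular fiber of $M_1$. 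The statement is true, but it needs its own lemma: capping the Seifert piece $\Sigma_0$ along its fiber and running Mayer--Vietoris over the branches gives $H_1(M_1)/\langle i_{1*}f_0\rangle\cong\bigoplus_i\Z/\alpha_i\oplus\bigoplus_j H_1(M_{v_0e_j})$, whence the product formula by induction over the tree. Your sketch only covers the case where all non-$e$ edges at both nodes are leaves.

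Second, and more seriously, the sign. Your ``Smith-normal-form'' step can be made precise: writing $T_i$ for the torsion of $H_1(M_i)$ and bars for images in $H_1(M_i)/T_i\cong\Z$, injectivity of $\psi|_L$ forces its image to meet $T_1\oplus T_2$ trivially, so $\num{\operatorname{coker}(\psi|_L)}=\num{T_1}\,\num{T_2}\,\num{\bar a_0\bar b_1-\bar a_1\bar b_0}$, where $a_j=i_{1*}f_j$ and $b_j=i_{2*}f_j$. The four order computations give only the absolute values $\num{T_1}\num{\bar a_1}=r_1$, $\num{T_2}\num{\bar b_0}=r_0$, $\num{T_1}\num{\bar a_0}=\prod_i n_{0i}$, $\num{T_2}\num{\bar b_1}=\prod_j n_{1j}$; the determinant, however, depends on the \emph{relative} sign of the two products $\bar a_0\bar b_1$ and $\bar a_1\bar b_0$, so at this point the cokernel order is either $\num{\,r_0r_1-\prod_i n_{0i}\prod_j n_{1j}\,}$ or $r_0r_1+\prod_i n_{0i}\prod_j n_{1j}$. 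Proving that this relative sign equals $\epsilon_0\epsilon_1$ --- i.e.\ relating the signs of the integers $\bar a_j,\bar b_j$ to the signs of linking numbers of fibers, which is how the paper defines the node signs --- is precisely the content of the edge-determinant formula, and it is the step you defer with ``with the correct sign.'' Until both items are supplied, the argument establishes only $p\,\num{H_1(M)}=\num{T_1}\num{T_2}\num{\bar a_0\bar b_1-\bar a_1\bar b_0}$, not that this equals $\num{D(e)}$. So: right strategy, fillable gaps, but the two decisive steps are asserted rather than proved.
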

By the fiber intersection number one means, the intersection number in
the torus of a fiber from each of the two Seifert fibered pieces $v$
and $w$ with appropriate chosen orientations, for more details see
\cite{commensurability} or \cite{myarticle}.

\begin{thm}
A $\Q$HS graph manifold is the link or an isolated complex surfaces
singularity if and only if the are no negative signs in its splice
diagram and all edge determinants are positive.
\end{thm}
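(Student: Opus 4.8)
The plan is to reduce the statement to Grauert's criterion, which says that a $\Q$HS graph manifold is the link of an isolated complex surface singularity precisely when it bounds a negative definite plumbing. By Proposition \ref{rationalhomologyspheres} any plumbing realisation of a $\Q$HS is already a tree of spheres with non degenerate intersection form, so the whole problem becomes the following: determine when this intersection form can be arranged to be negative definite, and translate that condition into the sign and edge determinant data of $\Gamma(M)$. Throughout I would pass freely between a plumbing diagram of $M$ and its splice diagram using the correspondence recalled above.

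For the direction that a singularity link satisfies the conditions, I would start from the minimal good resolution of the singularity, whose dual graph is a negative definite plumbing tree bounding $M$. The sign attached to a node is the linking number of two generic fibres of the corresponding Seifert piece, and for a Seifert piece sitting inside a negative definite plumbing the orbifold Euler number is negative; with the complex orientation this forces the fibre linking number, and hence the node sign, to be positive. For the edge determinants I would invoke Proposition \ref{edgedeterminantequation}: the fibre intersection number $p=\num{D}/\num{H_1(M)}$ is positive because $M$ is a $\Q$HS, so it only remains to control the sign of $D$, and negative definiteness of the $2\times2$ block of the intersection form associated to the edge pins this sign down to be positive.

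For the converse I would assume all node signs are positive and all edge determinants are positive and build a negative definite plumbing with boundary $M$. Starting from any plumbing realisation, positivity of the signs guarantees that each Seifert block contributes a negative definite local piece, and positivity of the edge determinants guarantees that the off diagonal gluing terms do not destroy definiteness when the blocks are assembled along the tree. I would run this as an induction on the tree, peeling off a leaf node at a time and using non degeneracy from Proposition \ref{rationalhomologyspheres} together with the edge determinant equation to check that definiteness is preserved at each amalgamation; Grauert's criterion then identifies $M$ as a singularity link.

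The main obstacle is this converse direction, and specifically the passage from the purely local positivity data (one sign per node, one determinant per edge) to the global spectral statement that the full intersection form is negative definite. Negative definiteness is not a local condition, so the inductive step must show that assembling two negative definite pieces across an edge of positive edge determinant again yields a negative definite form, which is exactly the kind of Schur complement computation that the later Proposition \ref{intersectionform} is designed to supply. I expect the bookkeeping of orientations and signs in matching the splice diagram conventions used here, with signs on nodes, to the standard Eisenbud--Neumann conventions to be the most error prone part of the argument.
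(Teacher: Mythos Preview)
The paper does not actually prove this theorem: it is quoted, together with the Edge Determinant Equation, as one of ``the following two results from \cite{myarticle}'' and is left without proof. So there is no proof in the present paper to compare against; the result is imported from the earlier article.

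That said, your outline via Grauert's criterion is the standard route and is almost certainly what the cited article does in spirit: translate ``singularity link'' into ``bounds a negative definite plumbing'', and then identify negative definiteness with the positivity of the node signs (equivalently, negativity of the orbifold Euler numbers of the Seifert pieces) together with positivity of the edge determinants. The forward direction is essentially as you say. For the converse, the right object to work with is not the full plumbing intersection form directly but the reduced \emph{decomposition matrix} of \cite{commensurability}: its diagonal entries are the rational Euler numbers $e_v$ of the pieces and its off-diagonal entries are $1/p_e$, and Neumann shows the intersection form is negative definite iff this much smaller matrix is. Your inductive ``peel off a leaf node and take a Schur complement'' argument then goes through cleanly, because eliminating a leaf $v$ replaces $e_w$ by $e_w-\tfrac{1}{p_e^2 e_v}$, and the hypotheses are exactly what make each such step preserve negative definiteness.

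One small caution: your appeal to Proposition~\ref{intersectionform} is misplaced. That proposition concerns non-degeneracy of the intersection form of the \emph{universal abelian cover} under the special splice-diagram hypotheses of Corollary~\ref{ratinalhomunabcover}, not definiteness of the intersection form of $M$ itself; it uses the same Schur-complement mechanism, but it is not the statement you need here and its hypotheses are different. For the present theorem you want the analogous reduction on $M$'s own decomposition matrix, with sign control rather than mere non-vanishing.
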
 

The other numbers derived from the splice diagram we need is called
\emph{ideal generators} and is associated to a vertex $v$ and adjacent
edge $e$. To define it we first need following construction. Let $v$
and $w$ be two vertices in $\Gamma$, then their \emph{linking number}
$l_{vw}$ is the product of all edge weights adjacent two but not on
the shortest path from $v$ to $w$. We define $l_{vw}'$ similar except
we do not include the weights adjacent to $v$ and $w$. Then if $v$ is
a node and $e$ and adjacent edge we define
an ideal of $\Z$ by
\begin{align*}
I_{ve}&=\langle l_{vw}'\vert\ w\text{ is a leaf of }\Gamma_{ve}\rangle
\end{align*}
 where $\Gamma_{ve}$ is the connected component of $\Gamma-e$ not
 including $v$. We the define the ideal generator $\overline{d}_{ve}$
 as the positive generator of $I_{ve}$.

The ideal generator is important because of the following \emph{ideal
  condition}. 

\begin{defn}
A splice diagram is said to satisfy the ideal condition if for any
node $v$ and adjacent edge $e$, then the ideal generator
$\overline{d}_{ve}$ divides the edge weight $d_{ve}$. 
\end{defn}

Every splice diagram coming from a graph manifold satisfy the ideal
condition. This follows from 
the following topological description of the ideal generator from
\cite{neumannandwahl2}.

\begin{thm}\label{idealcon}
Let $M$ be a $\Q$HS and $\Gamma(M)$ its splice diagram, let $v$ be a
node and $e$ an adjacent edge, then
$\overline{d}_{ve}=\num{H_1(M_{ve},K)}$. Where $K$ is the core of the
solid torus one glue to $M_{ve}'$ to construct $M_{ve}$.
\end{thm}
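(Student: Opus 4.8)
The plan is to compute $\num{H_1(M_{ve},K)}$ by first turning it into the order of a cokernel depending only on the subtree $\Gamma_{ve}$, and then to evaluate that order by induction on the number of nodes, reading off the gcd of the products $l_{vw}'$. For the homological reduction, recall that $K$ is the core of the glued solid torus $V=D^2\times S^1$ and that $M_{ve}=M_{ve}'\cup_{T_e}V$. Excising an open sub–solid-torus of $V$ disjoint from $T_e$ identifies $H_1(M_{ve},K)\cong H_1(M_{ve}',T_e)$, and since $T_e$ and $M_{ve}'$ are connected the long exact sequence of the pair collapses to
\begin{align*}
H_1(M_{ve},K)\cong H_1(M_{ve}',T_e)\cong\operatorname{coker}\big(i_*\colon H_1(T_e)\to H_1(M_{ve}')\big),
\end{align*}
with $i\colon T_e\hookrightarrow M_{ve}'$. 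In particular this group sees only the $\Gamma_{ve}$-side and is insensitive to the filling slope and to the weight $d_{ve}$, exactly as $\overline{d}_{ve}$ is.

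The base case is a single node. If $\Gamma_{ve}$ is one node $w_0$ with leaves carrying edge weights $\alpha_1,\dots,\alpha_k$, then $M_{ve}'$ is Seifert fibered over a disc with $k$ exceptional fibers and boundary $T_e$. Presenting $H_1(M_{ve}')$ by the fiber $h$, the boundary section $x_0$, and loops $x_1,\dots,x_k$ about the exceptional fibers, with relations $\alpha_i x_i+\beta_i h=0$ and $\sum_{i=0}^{k}x_i=0$, and using $i_*H_1(T_e)=\langle h,x_0\rangle$, one obtains
\begin{align*}
H_1(M_{ve}',T_e)\cong\Big(\bigoplus_{i=1}^{k}\Z/\alpha_i\Big)\Big/\big\langle(1,\dots,1)\big\rangle,
\end{align*}
whose order is $\prod_i\alpha_i/\lcm_i\alpha_i=\gcd_j\prod_{i\neq j}\alpha_i$ by a prime-by-prime valuation count. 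Since $l_{vw_j}'=\prod_{i\neq j}\alpha_i$ for the leaf $w_j$, this is precisely $\overline{d}_{ve}$.

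For the inductive step I would cut $\Gamma_{ve}$ at an edge $f$ leaving $w_0$ toward a further node, split $M_{ve}'$ along the corresponding torus $T_f$ into the Seifert piece of $w_0$ and the remaining graph manifold, and apply Mayer--Vietoris. The fiber intersection number on $T_f$ is governed by the edge determinant through Proposition~\ref{edgedeterminantequation}, which pins down how the fiber and section classes on $T_f$ map into each side. Tracking these images through the Mayer--Vietoris sequence expresses the order of the cokernel over $\Gamma_{ve}$ as a combination of the orders over the two sub-diagrams, weighted by the edge weights at $w_0$ along $f$.

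The main obstacle is closing this step combinatorially: one must show that the order produced by the gluing is again $\gcd_w l_{vw}'$ over all leaves $w$ of the enlarged tree. This amounts to the identity that the gcd of the path-products behaves correctly under attaching a node, factoring through the weights at $w_0$ just as the valuation computation did in the base case. I would establish it prime-by-prime, using that each $l_{vw}'$ splits as the contribution from $w_0$ times the contribution from the subtree beyond $f$, and that $\gcd$ distributes over the branchings of the tree. With this identity in hand the induction closes and yields $\overline{d}_{ve}=\num{H_1(M_{ve},K)}$.
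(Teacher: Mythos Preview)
The paper does not itself prove this theorem; it is quoted from Neumann and Wahl \cite{neumannandwahl2}, so there is no in-paper argument to compare your approach against.

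On your proposal: the reduction $H_1(M_{ve},K)\cong H_1(M_{ve}',T_e)\cong\operatorname{coker}\big(H_1(T_e)\to H_1(M_{ve}')\big)$ is correct (replace $K$ by the solid torus of which it is the core, then excise), and your one-node base case is computed correctly, including the identity $\prod_i\alpha_i/\lcm_i\alpha_i=\gcd_j\prod_{i\ne j}\alpha_i$.

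The inductive step, however, is only a plan, and the plan as stated has a misdirection. Proposition~\ref{edgedeterminantequation} (the fiber intersection number on $T_f$) is not what drives the Mayer--Vietoris computation; the relevant extra input is the very definition of the edge weight $r=d_{w_0f}=\lvert H_1(M_{w_0f})\rvert$, which identifies $H_1(M_{w_0f}')/\langle h_f\rangle$ as a group of order $r$ and hence gives $\operatorname{ord}(\bar s_f)=r/\overline{d}_{w_0f}$ in that quotient. With this in hand, the Mayer--Vietoris argument in the simplest case (one further node-edge $f$ at $w_0$, leaf weights $\alpha_1,\dots,\alpha_k$) yields
\[
\lvert H_1(M_{ve}',T_e)\rvert=\frac{r\prod_i\alpha_i}{\lcm\!\big(\lcm_i\alpha_i,\ r/\overline{d}_{w_0f}\big)},
\]
and one must then check that this equals $\gcd\!\big(r\gcd_j\prod_{i\ne j}\alpha_i,\ (\prod_i\alpha_i)\,\overline{d}_{w_0f}\big)=\overline{d}_{ve}$; this reduces to the elementary identity $d\cdot\gcd(L,r/d)=\gcd(Ld,r)$ for $d\mid r$. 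None of this is carried out in your write-up, and the case of several node-edges leaving $w_0$ still has to be handled. As written the inductive step is a genuine gap, though the route is viable once the correct ingredient (the edge-weight definition, not the edge determinant) is used.
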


Since our proofs in the next section relies on the combinatorics of
splice diagram, we will introduce some helpful notation. 
\begin{defn}\label{sees} 
We say that an edge weight $r$ of a splice diagram \emph{sees} a vertex $v$ (or
edge $e$) of the splice diagram if, when we delete the node which $r$
is adjacent to, the vertex $v$ (or the edge $e$) and the edge which
$r$ is on are in the same connected component. 
\end{defn}

\begin{rmk}\label{notationaboutsee}
Given a vertex $v$ on any edge $e$ between nodes, one of the
edge weights at $e$ sees $v$ and the other does not see $v$.
Let us introduce the following notation. Let $v$ be a vertex
of $\Gamma$ and let $v'$ be a node of $\Gamma$, where $v\neq v'$. Then
let  $r_{v'}(v)$ be the unique edge weight at an edge adjacent to $v'$
which sees $v$. Likewise let $d_{v'}(v)$ be the unique ideal generator
associated to $v'$, which sees $v$.
\end{rmk}

\begin{defn}
We say that an edge weight $r_v$ sees an edge weight $r_{v'}$ if $r_v$
sees $v'$ and $r_{v'}\neq r_{v'}(v)$. Likewise for ideal generators. 
\end{defn}

\begin{prop}\label{idealgeneratorproperties}
Let $v$ be a node of a splice diagram $\Gamma$ of a manifold $M$. Let
$r_v$ be a edge weight adjacent to $v$ and let $d_v$ be the
corresponding ideal generator. Then $r_v$ and $d_v$ are divisible by
every ideal generator they see. Moreover if $r_v$ and $d_v$ see a
node $v'$ and $n,n'$ are edge weights at $v'$ and $n,n'\neq r_{v'}(v)$,
then $\gcd(n,n')\mid r_v,d_v$.
\end{prop}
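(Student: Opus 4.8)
The plan is to reduce everything to a divisibility statement about the numbers $l'_{vw}$ and then argue by induction on the distance in $\Gamma$ from $v$ to the node $v'$ being seen. Since $\Gamma$ comes from a manifold it satisfies the ideal condition, so $\overline{d}_{ve}\mid d_{ve}$, i.e. $d_v\mid r_v$; hence it suffices to prove each asserted divisibility for $d_v=\overline{d}_{ve}$, and the corresponding statement for $r_v=d_{ve}$ follows automatically. Because $\overline{d}_{ve}$ is the positive generator of $I_{ve}=\langle l'_{vw}\mid w\text{ a leaf of }\Gamma_{ve}\rangle$, it equals $\gcd\{\,l'_{vw}\mid w\text{ a leaf of }\Gamma_{ve}\,\}$, so to prove that an integer $N$ divides $d_v$ I only need $N\mid l'_{vw}$ for every leaf $w$ of $\Gamma_{ve}$. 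Thus both claims (with $N=\overline{d}_{v'e'}$ for the first and $N=\gcd(n,n')$ for the moreover) are reduced to a leaf-by-leaf divisibility.

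First I would dispose of the two direct cases. Fix a leaf $w$ of $\Gamma_{ve}$ and consider the position of $v'$ relative to the path from $v$ to $w$. If the path passes through $v'$ and leaves it along $e'$, then $w$ is a leaf of $\Gamma_{v'e'}$; comparing interior nodes of the two paths shows $l'_{v'w}\mid l'_{vw}$ (the larger product has the extra off-path weights coming from the segment $v\to v'$ together with those at $v'$), and since $\overline{d}_{v'e'}\mid l'_{v'w}$ we get $\overline{d}_{v'e'}\mid l'_{vw}$. If instead $v'$ lies on the path but $e'$ is off the path at $v'$, then $v'$ is an interior node and the edge weight $d_{v'e'}$ is one of the off-path factors of $l'_{vw}$; combined with the ideal condition $\overline{d}_{v'e'}\mid d_{v'e'}$ this again gives $\overline{d}_{v'e'}\mid l'_{vw}$. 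For the moreover part, when $v'$ lies on the path the two distinct edges carrying $n$ and $n'$ cannot both be the outgoing path-edge, so at least one of $n,n'$ is an off-path weight at the interior node $v'$ and is therefore a factor of $l'_{vw}$; hence $\gcd(n,n')\mid l'_{vw}$.

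The remaining, and genuinely inductive, case is when $v'$ does not lie on the path from $v$ to $w$. Here I would let $z$ be the node at which the path toward $v'$ branches off the path toward $w$; it is interior to the path $v\to w$ and lies strictly between $v$ and $v'$. The edge at $z$ pointing toward $v'$ is then off-path, so its weight is a factor of $l'_{vw}$, and this weight sees $v'$; moreover the edge at $v'$ toward $z$ is the same as the edge toward $v$, so $r_{v'}(z)=r_{v'}(v)$ and $d_{v'}(z)=d_{v'}(v)$, which shows that the hypotheses $\overline{d}_{v'e'}\neq d_{v'}(v)$ (resp. $n,n'\neq r_{v'}(v)$) translate into the corresponding seeing conditions at $z$. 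Since $z$ is strictly closer to $v'$ than $v$ is, the induction hypothesis applies to the edge weight at $z$ and yields the divisibility of that off-path factor, hence of $l'_{vw}$. The base case, $v'$ adjacent to $v$, involves only the direct cases above, since then every leaf of $\Gamma_{ve}$ has its path passing through $v'$. The main obstacle I anticipate is precisely the organization of this last case: correctly locating the branch node $z$, checking that the seeing relation is inherited so that the recursive appeal is to a strictly smaller instance, and verifying carefully that off-path edge weights at interior nodes really occur as factors of $l'_{vw}$ as defined.
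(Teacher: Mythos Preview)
Your proof is correct and follows essentially the same strategy as the paper: reduce to $d_v$ via the ideal condition, then induct on the distance from $v$ to $v'$, splitting the leaves $w$ of $\Gamma_{ve}$ according to whether the path $v\to w$ passes through $v'$ or not and invoking the induction hypothesis at an intermediate node in the latter case. Your organization is slightly different---you run the case analysis leaf by leaf and treat both assertions simultaneously, whereas the paper proves the $\gcd(n,n')$ statement first and then says the ideal-generator statement follows by a similar argument---but the underlying mechanism is identical; in fact your Cases~A and~B make the first assertion more explicit than the paper does.
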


\begin{proof}
We first observe that it is enough to show the proposition only for
$d_v$, since $d_v\mid r_v$ by \ref{idealcon}. Let  $e$ be the edge
$d_v$ is on. 

 We will show
this by induction on the number of edges between $v$ and $v'$. If
$e$ is adjacent to $v'$, then $d_{v}$ is the generator of an ideal $I$,
which can be generated by elements, each of which is divisible by the
product of all but one of the edge weights at $v'$ not on
$e$. But this implies that each of the elements in the
generating set is divisible by either $n$ or $n'$, and hence each the
elements is divisible by $\gcd(n,n')$, and therefore $d_{v}$ is divisible by
$\gcd(n,n')$. 

Assume by induction that if there are $k$ edges between $v''$ and $v'$
then $d_{v''}(v')$ are divisible by
$\gcd(n,n')$. Assume that there
  are $k+1$ edges between $v$ and $v'$. 
Let $d_i$ for $i\in 1,\dots,k$ be $d_{\widetilde{v}}(v')$ on the
vertex $\widetilde{v}$ on $i$'th edge between $v$ and $v'$, then by induction
$\gcd(n,n')\mid d_i$ for all $i$. Remember that $d_{v}$ is the
generator of the ideal 
\begin{align}
\langle l_{vw}'\vert w \text{ is a leaf of }\Gamma_{ve}\rangle. 
\end{align}    
where $l_{vw}'$ is the product of the edge weights adjacent to but not on
the path from $v$ to $w$. Now there are two types of leaves $w$: the
leaves $w$ where the path between $w$ and $v$ goes through $v'$, and
the one where the path does not go through $v'$. In the first case,
$n\mid l_{v'w}'$ or $n'\mid l_{vw}'$ or both, so in this case
 $\gcd(n,n')\mid l_{vw}'$. In the second case one of the $d_i$'s will
 divide $l_{vw}'$. This
 implies that $\gcd(n,n')\mid l_{vw}'$ for all $w$, and hence
 $\gcd(n,n')$ divides the generator of the ideal $d_{v}$. 
The following illustrates how $\Gamma$ looks in the first and the
second case of the induction. In the first case, the path to $w$ can
also pass through the edges with $n$ or $n'$.
$$\splicediag{8}{30}{
&&&&&\\
&&&&&\\
  \Vdots&\overtag\Circ {v} {8pt}\lineto[ul]
  \lineto[dl]
  \lineto[r]^(.5){r_{ve}}&\dashto[r]& \overtag\Circ{v'}{8pt}
  \lineto[uur]^(.5){n}\lineto[ur]_(.5){n'}
  \lineto[dr]&\Vdots&\\
&&&&\dashto[ddr]&\\ 
&&&&&\\ 
&&&&&\overtag\Circ {w} {8pt}
\hbox to 0 pt{~,\hss} }$$
$$\splicediag{8}{30}{
&&&&&&\\
  \Vdots&\overtag\Circ {v} {8pt}\lineto[ul]
  \lineto[dl]
  \lineto[r]^(.5){r_{ve}}&\dashto[r]&
  \Circ\lineto[r]^(.5){d_i}\lineto[dr]&\dashto[r]& 
 \overtag\Circ{v'}{8pt}
  \lineto[ur]^(.5){n}\lineto[dr]_(.5){n'}&\Vdots\\
&&&&\dashto[ddr]&&\\  
&&&&&&\\
&&&&&\overtag\Circ {w} {8pt}
\hbox to 0 pt{~,\hss} }$$

The statement about $d_v$ being divisible by ideal generators it sees
follows from a similar argument as above.
\end{proof}

\section{Main Theorems}

To determine conditions on the splice diagram
for the universal abelian cover to be a rational homology sphere, we
investigate the construction of the universal abelian
cover in Theorem 6.3 of \cite{myarticle}. Since we construct the
universal abelian cover by induction, there are two places where
obstructions to being a rational homology sphere can arise: in the
inductive step, and in the base case. 

We start by looking at the base case, that is a splice diagram with
one node. We distinguish between diagrams with an edge weight of 0 and
those without. In the
case of an edge weight of $0$, we never get rational homology sphere
universal abelian covers. The universal abelian cover $X$ of
$L(p,q)\conect L(p',q')$ is $p$ copies of $S^3$ with $p'$
balls removed, glued to $p'$ copies of $S^3$ with $p$ balls removed,
where the former pieces are glued to the latter pieces exactly once
each. Then a Meyer-Vietoris argument shows that the rank
of the first homology group is $(p-1)(p'-1)$. Since the universal
abelian covers of iterated connected sums of lens spaces will contain several
copies of $X$ as connected summands, it is clear that a connected sum of lens
space can not have rational homology sphere universal abelian covers.

This leaves the second case, determining which Seifert fibered,
or more precisely, which $S^1$ orbifold bundles have rational homology
sphere universal abelian covers. By the results of \cite{neumann832} and
\cite{geometryss}, which also works for graph orbifolds, this is the same as
determining which links of Brieskorn complete intersections are
rational homology spheres.  

\begin{prop}\label{rationalhomologyspherebrieskorn}
$\Sigma(\alpha_1,\dots,\alpha_n)$ is a rational homology sphere if and
only if one of the three following conditions holds.
\begin{enumerate}
\item $\gcd(\alpha_i,\alpha_j)=1$ for all $i\neq j$.
\item There exist
  a single pair $k,l$, such that $\gcd(\alpha_k,\alpha_l)\neq 1$.
\item There exist a single triple $k,l,m$ such that
  $\gcd(\alpha_l,\alpha_k)=\gcd(\alpha_l,\alpha_m)=\gcd(\alpha_m,\alpha_k)=2$;
  for all other indices $\gcd(\alpha_i,\alpha_j)=1$.
\end{enumerate}
\end{prop}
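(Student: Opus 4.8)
The plan is to reduce the question to the genus of the base orbifold of the Seifert fibration and then carry out an elementary number-theoretic case analysis.

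First I would use that $\Sigma(\alpha_1,\dots,\alpha_n)$ is the link of a normal (isolated complete intersection) surface singularity $V\subset\C^{n}$, so its minimal good resolution is star-shaped: a central curve of some genus $g$ together with arms that are strings of rational curves, and the intersection form is negative definite (being a resolution of a normal surface singularity), hence nondegenerate. By Proposition \ref{rationalhomologyspheres} the manifold is a $\Q$HS if and only if the plumbing graph is a tree of spheres, i.e.\ if and only if $g=0$. Thus everything comes down to computing $g$ and deciding when it vanishes.

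To compute $g$ I would realize the base orbifold as the quotient curve $C=(V\setminus\{0\})/\C^{*}$ inside the weighted projective space $\mathbb P(w_1,\dots,w_n)$, where $\ell=\lcm(\alpha_1,\dots,\alpha_n)$ and $w_i=\ell/\alpha_i$. The map $[z]\mapsto[z_1^{\alpha_1}:\dots:z_n^{\alpha_n}]$ presents $C$ as a Galois cover of the line $\mathbb P^{1}$ cut out by the generic linear equations; its degree is $|G|=\prod_i\alpha_i/\ell$ and it is branched over the $n$ coordinate points with uniform ramification index $m_k=\alpha_k/g_k$, where $g_k=\gcd_{j\neq k}(\ell/\alpha_j)$. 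Riemann--Hurwitz then gives
\begin{align*}
2-2g=|G|\Big(2-\sum_{k=1}^{n}\big(1-\tfrac1{m_k}\big)\Big).
\end{align*}
A short valuation computation shows that $m_k>1$ exactly when $\alpha_k$ has a common prime factor with some other $\alpha_j$, and that deleting an $\alpha_k$ coprime to all the others changes neither $g$ nor $|G|$; so I may delete all such indices and assume every surviving $\alpha_k$ shares a factor. Since each nonzero summand $1-1/m_k$ is at least $\tfrac12$ while the total equals $2-2/|G|<2$, at most three indices survive, and because sharing a factor is symmetric the surviving number $t$ cannot be $1$. Hence $t\in\{0,2,3\}$, and $t=0,2$ give precisely conditions (1) and (2) (for $t=2$ one checks $m_{k_1}=m_{k_2}=|G|=\gcd(\alpha_{k_1},\alpha_{k_2})$, so $g=0$ automatically).

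It remains to settle $t=3$, where after deletion $n=3$ and a direct simplification of the formula (using $\ell=\alpha_1\alpha_2\alpha_3\,e/(d_{12}d_{13}d_{23})$) yields the clean expression
\begin{align*}
2-2g=d_{12}+d_{13}+d_{23}-\frac{\alpha_1\alpha_2\alpha_3}{\ell}=d_{12}+d_{13}+d_{23}-\frac{d_{12}d_{13}d_{23}}{e},
\end{align*}
with $d_{ij}=\gcd(\alpha_i,\alpha_j)$ and $e=\gcd(\alpha_1,\alpha_2,\alpha_3)$. Setting $g=0$: if exactly two of the $d_{ij}$ exceed $1$ then the third equals $1$, forcing $e=1$, and the identity becomes $(d-1)(d'-1)=0$, which is impossible; so all three $d_{ij}$ must exceed $1$. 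Writing $a\le b\le c$ for them and using $e\mid a$ (hence $e\le a$) forces $(b-1)(c-1)\le a-1\le b-1$, which squeezes $a=b=c=2$. This is exactly condition (3). The geometric input in the third paragraph---pinning down the covering degree $|G|$ and the ramification indices $m_k$ in the weighted/stacky setting---is the part that needs the most care; once the Riemann--Hurwitz formula is in hand, and especially once it is reduced to the three-variable identity above, the remaining analysis is elementary.
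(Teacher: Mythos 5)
Your proposal is correct, and while it shares the paper's overall skeleton (everything is reduced to the vanishing of the genus of the Seifert base, followed by elementary number theory), it executes both halves by a genuinely different route. The paper gets ``$\Q$HS iff $g=0$'' from Theorem 2.1 of \cite{neumannandraymond} (a Seifert fibered manifold is a $\Q$HS iff $e\neq 0$ and $g=0$, and $e\neq0$ holds automatically for Brieskorn complete intersections), and it simply quotes that theorem's genus formula, namely equation \eqref{genuseqzero}; you instead obtain the reduction from negative definiteness of the star-shaped resolution together with Proposition \ref{rationalhomologyspheres}, and you re-derive the genus by Riemann--Hurwitz on the quotient curve. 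These agree exactly: your ramification indices satisfy $m_k=A/A_k$ in the paper's notation (check prime by prime), so your formula $2-2g=|G|\bigl(2-\sum_k(1-1/m_k)\bigr)$ is literally \eqref{genuseqzero}. The real divergence is in the ``only if'' arithmetic. The paper first rules out two \emph{disjoint} non-coprime pairs via an inequality forcing $BC-B-C<0$, and then settles the remaining triple case through the $n=3$ equation and the parametrization $\alpha_i=ds_js_kt_i$ (or by citing \cite{hamm}); you delete indices coprime to all others and bound the number $t$ of survivors by the orbifold-Euler-characteristic estimate (each ramified point contributes at least $1/2$ to a sum that must stay below $2$, so $t\le3$, and symmetry excludes $t=1$). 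Your counting is arguably more uniform: it automatically excludes ``star'' configurations in which one $\alpha_k$ shares factors with three or more pairwise coprime $\alpha_j$'s, a configuration that the paper's reduction to a single triple passes over quickly, since such a family of non-coprime pairs is pairwise intersecting yet not contained in any triple. Your endgame identity $2-2g=d_{12}+d_{13}+d_{23}-d_{12}d_{13}d_{23}/e$, with the squeeze $(b-1)(c-1)\le a-1\le b-1$, is also a cleaner closing argument than the paper's. What the paper's route buys is economy of machinery: it never needs the covering geometry of the quotient curve in weighted projective space, which is precisely the step you flag as requiring the most care, and which is in substance equivalent to the Neumann--Raymond result the paper invokes.
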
 

The first condition is of course the case where
$\Sigma(\alpha_1,\dots,\alpha_n)$ is a integer homology sphere, as we
saw earlier.

\begin{proof}
The if direction follows from \cite{hamm}, where Hamm proves a
sufficient condition for the link of Brieskorn complete intersections of any
dimension to be rational homology spheres. He could only prove the
other direction if the number of variables was at most twice the
dimension plus two. We will give a different proof in the case of
surfaces, using the description of the Seifert invariants given in
Theorem 2.1 in \cite{neumannandraymond}. 

A Seifert fibered manifold is a
rational homology sphere if and only if the rational euler number
$e$ is nonzero, and the genus $g$ is zero. From the formulas of Theorem
2.1 in \cite{neumannandraymond} we see that
$e(\Sigma(\alpha_1,\dots,\alpha_n))\neq 0$, so we need only show that
the conditions above are equivalent to the genus being $0$. In other
words it is enough to show that the following equation holds if and
only if one of the three conditions does:
\begin{align}\label{genuseqzero}
0=2+(n-2)\frac{\prod_i\alpha_i}{\lcm_i(\alpha_i)}-
\sum_{i=1}^n\frac{\prod_{j\neq i}\alpha_j}{\lcm_{j\neq i}(\alpha_j)}.
\end{align}
Let $A=\frac{\prod_i\alpha_i}{\lcm_i(\alpha_i)}$ and
$A_i=\frac{\prod_{j\neq i}\alpha_j}{\lcm_{j\neq i}(\alpha_j)}$. 
%Notice we have that $A=\prod_{i<j}\gcd(\alpha_i,\alpha_j)$,
%$A_i=\prod_{\substack{j<k  \\ j,k\neq i}}\gcd(\alpha_j,\alpha_k)$ and
%$\tfrac{A}{A_i}=\prod_{j\neq i}\gcd(\alpha_i,\alpha_j)$. 

We start by proving the ``if'' direction. Assume condition $1$ holds, then
$A=1$ and $A_i=1$ for all $i\in 1,2,\dots,n$, and we get
\begin{align}
g=2+(n-2)A-\sum_{i=1}^nA_j=2+(n-2)+\sum_{i=1}^n1=2+(n-2)-n=0
\end{align}
Assume that condition $2$ holds, and let $\gcd(\alpha_k,\alpha_l)=B$. Then
$A=B$, $A_k=A_l=1$ and $A_i=B$ if $i\neq k,l$. We get
\begin{align}
g=2+(n-2)B-1-1-\sum_{i\neq
  k,l}B=2+(n-2)B-2-(n-2)B=0
\end{align}
Finally for condition $3$, $A=4$, $A_k=A_l=A_m=2$ and
$A_j=4$ if $j\neq k,l,m$. The genus is
\begin{align}
g=2+(n-2)4-2-2-2-\sum_{i\neq
  k,l,m}4=(n-2)4-4-(n-3)4=0.
\end{align}
This conclude the ``if'' direction. 

For the ``only if'' direction we start by assuming the equation
\eqref{genuseqzero} 
holds. Suppose we have $\alpha_j,\alpha_k,\alpha_l,\alpha_m$,
such that $\gcd(\alpha_j,\alpha_k)=B$ and
$\gcd(\alpha_l,\alpha_m)=C$. Notice that $BC\mid A$, $B\mid A_l,A_m$,
$C\mid A_j,A_k$ and $BC\mid A_i$ for $i\neq j,k,l,m$. Let
$A'=\frac{A}{BC}$, $A_j'=\frac{A_j}{C}$, $A_k'=\frac{A_k}{C}$,
$A_l'=\frac{A_l}{B}$, $A_m'=\frac{A_m}{B}$ and $A_i'=\frac{A_i}{BC}$
if $i\neq j,k,l,m$. $A\geq A_i$ for all $i$ so clearly $A'\geq A_i'$ for
$i\neq j,k,l,m$. 
If $i=j,k$ then $B\mid\frac{A}{A_i}$, and if
$i=l,m$ then $C\mid\frac{A}{A_i}$, so we also
get $A'\geq A_i'$. 

Hence 
\begin{align}
 0&=2+(n-2)BCA'-CA_j'-CA_k'-BA_l'-BA_m'-\sum_{s\neq j,k,l,m}BCA_s'\nonumber\\
   &\geq 2+(n-2)BCA'-2CA'-2BA'-\sum_{s\neq j,k,l,m}BCA'=2+2A'(BC-C-B).
\end{align}
Since $A'\geq1$ this implies that $BC-C-B<0$ and hence either $B=1$ or
$C=1$. 

We have now proved that $\gcd(\alpha_i,\alpha_j)=1$ except that there
might be $\alpha_k,\alpha_l,\alpha_m$ such that
$\gcd(\alpha_k,\alpha_l)=B$ and $\gcd(\alpha_k,\alpha_m)=C$ and
$\gcd(\alpha_l,\alpha_m)=D$. Notice that
$A=\frac{\alpha_k\alpha_l\alpha_m}{\lcm(\alpha_k,\alpha_l,\alpha_m)}$,
$A_k=\frac{\alpha_l\alpha_m}{\lcm(\alpha_l,\alpha_m)}$,
$A_l=\frac{\alpha_k\alpha_m}{\lcm(\alpha_k,\alpha_m)}$, 
$A_m=\frac{\alpha_k\alpha_l}{\lcm(\alpha_k,\alpha_l)}$, 
and $A_i=A$ for $i\neq k,l,m$. The equation becomes 
\begin{align}
 0&=2+(n-2)A-A_k-A_l-A_m-\sum_{s\neq k,l,m}A_j\nonumber\\
   &=2+(n-2)A-A_k-A_l-A_m-\sum_{s\neq k,l,m}A=2+A-A_k-A_l-A_m,
\end{align}
which is exactly the same equation as if $n=3$. But it is known in
this case that either $B=C=D=2$ or two of $B,C,D$ is $1$, from
the article of Hamm \cite{hamm}. One can also see this directly, if
$\alpha_1=ds_2s_3t_1$, $\alpha_1=ds_1s_3t_2$, and
$\alpha_3=ds_1s_2t_3$ where $\gcd(s_i,s_j)=1$ and
$\gcd(t_i,t_j)=1$, then
the equation becomes $0=2+d^2s_1s_2s_3-d(s_1+s_2+s_3)$. It is clear
that $d=1$ or $2$. If $d=2$ then the only solution is $s_1=s_2=s_3=1$
since the right hand side is increasing in $s_i$. If $d=1$ then the
only solution  is if two of the $s_i$'s are one,
since the right hand side increases if we increase two of the $s_i$'s. 

%We have now proved that $\gcd(\alpha_i,\alpha_j)=1$ except that there
%might be $\alpha_k,\alpha_l,\alpha_m$ such that
%$\gcd(\alpha_k,\alpha_l)=B$ and $\gcd(\alpha_k,\alpha_m)=C$ and
%$\gcd(\alpha_l,\alpha_m)=D$. 
 %It is clear that $BC\mid A$, $A_k=1$, $C\mid A_l$, $B\mid A_m$ and $BC\mid
 %A_i$ for $i\neq k,l,m$. 
% Let $A'=\frac{A}{BCD}$,  $A_k'=\frac{A_k}{CD}$,
%$A_l'=\frac{A_l}{CD}$, $A_m'=\frac{A_m}{DB}$ and $A_i'=\frac{A_i}{BCD}$
%if $i\neq k,l,m$. As before we get that $A'\geq A'_i$ for all $i$. So
%the equation becomes
%\begin{align}
%0&=2+(n-2)BCDA'-CDA'_k-CDA_l'-BDA_m'-\sum_{i\neq k,l,m}BCDA_i'\nonumber\\
%&\geq 2+(n-2)BCDA'-CDA'-CDA'-BDA'-\sum_{i\neq k,l,m}BCDA'\nonumber\\
%&=2+A'D(BC-2C-B).
%\end{align}
%Since $A'\geq1$ we get that $BC-B-2C<0$. We could also define
%$A''=\frac{A}{BCD}$,  $A_k''=\frac{A_k}{CD}$, 
%$A_l''=\frac{A_l}{BD}$, $A_m''=\frac{A_m}{DB}$ and $A_i''=\frac{A_i}{BCD}$
%if $i\neq k,l,m$. As before we get that $A''\geq A''_i$ for all
%$i$. This then give us the inequality $BC-2B-C<0$. Combining the two
%inequalities we get $2BC-3B-3C<0$. By symmetry we get get the
%inequalities $2BD-3B-3D<0$ and $2CD-3C-3D<0$, and the only solution to
%all three inequalities are if $(B,C,D)$ is one of the following
%$(b,1,1),(1,c,1),(1,1,d),(2,2,2),(3,2,2),(2,3,2)$ and $(2,2,3)$ for
%any $b,c,d>0$. In
%the case one of $B,C,D$ are $3$ and the other two are $2$, one gets
%that the genus is $2$ by direct calculation. Hence we have proved that
%if the genus of $\Sigma(\alpha_1,\dots,\alpha_n)$ is $0$, 
%one of the conditions holds. 
\end{proof}

Combining this result with an investigation of the inductive step
yields a necessary condition on the splice
diagram for the universal abelian cover to be a rational homology
sphere. We remember how we defined the notation $r_{v'}(v)$ in
\ref{notationaboutsee}.
%Let us introduce the following notation. Let $v$ be a vertex
%of $\Gamma$ and let $v'$ be a node of $\Gamma$, where $v\neq v'$. Then
%let  $r_{v'}(v)$ be the unique edge weight at an edge adjacent to $v'$
%which sees $v$. Likewise let $d_{v'}(v)$ be the unique ideal generator
%associated to $v'$, which sees $v$.    

\begin{cor}\label{ratinalhomunabcover}
Let $\Gamma$ be the the splice diagram of a manifold $M$, where the
universal abelian cover of $M$ is a rational homology sphere. Then all
edge weights are nonzero, and
there is a special node $v\in\Gamma$, with the following
properties. For all other nodes $v'\in\Gamma$, the weights other than
$r_{v'}(v)$ are pairwise coprime, and at most one of these
edge weights is not coprime with $r_{v'}(v)/d_{v'}(v)$. At $v$ all the
edge weights satisfy one of the conditions from Proposition
\ref{rationalhomologyspherebrieskorn}.  
\end{cor}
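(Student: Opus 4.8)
The plan is to run an induction on the number of nodes of $\Gamma$ that mirrors the inductive construction of the universal abelian cover in Theorem 6.3 of \cite{myarticle}, reading off, via Proposition \ref{rationalhomologyspheres}, the two ways the cover $\tilde M$ can fail to be a $\Q$HS: a lifted Seifert piece whose base orbifold has positive genus, or a cycle in the decomposition graph of $\tilde M$. Throughout I assume $\tilde M$ is a $\Q$HS and extract the stated conditions. First I dispose of the nonvanishing of edge weights. A weight $d_{ve}=0$ means $H_1(M_{ve})$ is infinite, so the local picture is a connected sum of lens spaces; as recalled in the base-case discussion its universal abelian cover contains summands $X$ with $b_1(X)=(p-1)(p'-1)>0$, which would force $b_1(\tilde M)>0$ and contradict the hypothesis. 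Hence every edge weight is nonzero and each Seifert piece genuinely fibers over an $S^2$-orbifold.

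The base case is a single node $v$, which I take to be the special node. Here there is no gluing, so the only obstruction is genus: by \cite{neumann832} and \cite{geometryss} the cover is the Brieskorn link $\Sigma(\alpha_1,\dots,\alpha_n)$ whose exponents are the edge weights, and Proposition \ref{rationalhomologyspherebrieskorn} says exactly when this has genus zero; equation \eqref{genuseqzero} is the Riemann--Hurwitz condition that the covering base be a sphere. This is precisely the requirement that the weights at $v$ satisfy one of the three conditions of the proposition. For the inductive step I root $\Gamma$ at the special node and peel off an extremal node $v'$, splitting $\Gamma$ along the edge $e$ that carries $r_{v'}(v)$ toward the root into the star of $v'$ and a smaller rooted diagram $\Gamma_0$. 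The construction of Theorem 6.3 presents $\tilde M$ as the cover over $\Gamma_0$ with the cover of the star of $v'$ glued along the lift of $T_e$; since $\tilde M$ is a $\Q$HS neither constituent can carry rational homology, so by induction $\Gamma_0$ already satisfies the corollary and it remains to extract the conditions at $v'$.

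The heart of the matter is the local analysis at the non-root node $v'$, where both obstructions contribute. For the genus obstruction, the base of the lifted piece at $v'$ is a branched cover of $S^2$ whose branching data are the \emph{actual} away-weights $n_1,\dots,n_k$ together with a multiplicity along the connecting edge $e$. The key input is that the \emph{effective} branching along $e$ is not $r_{v'}(v)$ but the reduced weight $r_{v'}(v)/d_{v'}(v)$: by Theorem \ref{idealcon} the ideal generator $d_{v'}(v)=\num{H_1(M_{v'e},K)}$ records exactly the fiber multiplicity already realised by the part of the cover coming from $\Gamma_0$, and Proposition \ref{idealgeneratorproperties} guarantees that $d_{v'}(v)$ absorbs precisely the common factors that $r_{v'}(v)$ shares with the rest of the tree, so that only $r_{v'}(v)/d_{v'}(v)$ survives as genuine ramification. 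Applying the genus-zero computation of Proposition \ref{rationalhomologyspherebrieskorn} to the effective exponents $\{n_1,\dots,n_k,\,r_{v'}(v)/d_{v'}(v)\}$ already forces these to satisfy one of conditions $1$, $2$, $3$. For the tree obstruction, the fiber intersection number $p=\num{D(e)}/\num{H_1(M_{v'})}$ of Proposition \ref{edgedeterminantequation} controls the number of components into which each lifted torus splits; two away-weights sharing a factor would cause the copies of the piece at $v'$ to be joined by two independent downstream connections, closing a loop in the decomposition graph. Thus being a tree of spheres forces $n_1,\dots,n_k$ to be pairwise coprime and rules out condition $3$ entirely, leaving conditions $1$ or $2$ with the single permitted non-coprimality lying in the root direction, i.e.\ at most one $n_i$ not coprime with $r_{v'}(v)/d_{v'}(v)$ --- exactly the stated condition at $v'$.

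I expect the main obstacle to be twofold. First, justifying that the effective root-direction multiplicity is precisely $r_{v'}(v)/d_{v'}(v)$ requires matching the deck-group action on the star of $v'$ against the already-built cover over $\Gamma_0$; this is where the divisibility statements of Proposition \ref{idealgeneratorproperties} do the real work, ensuring that the factors $r_{v'}(v)$ shares with distant weights are accounted for by $d_{v'}(v)$ and do not reappear as ramification. Second, one must pin down why the exceptional triple of condition $3$ survives only at the single special node: at the root the connections to the subtrees are governed by the reduced data at the child nodes, so the root's own Brieskorn condition cannot by itself close a loop, whereas at any other node condition $3$ would require two away-weights with a common factor and hence a cycle. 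This asymmetry is what makes the special node genuinely special, lets us root at the unique node (if any) carrying the relaxed behaviour, and --- when $\tilde M$ is in fact an integer homology sphere --- leaves every node strictly coprime so that the special node may be chosen arbitrarily.
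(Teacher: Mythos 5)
Your local analysis at a non-root node (the reduced weight $r_{v'}(v)/d_{v'}(v)$ as the effective exponent, cycles arising from non-coprime away-weights via Proposition \ref{idealgeneratorproperties}, condition 3 excluded away from the special node) matches the paper's, but the inductive scaffolding you hang it on has a genuine gap. Your inductive step asserts: ``since $\tilde M$ is a $\Q$HS neither constituent can carry rational homology, so by induction $\Gamma_0$ already satisfies the corollary.'' The constituents produced by splitting along the lifts of $T_e$ are compact manifolds with toral boundary, so each of them has $b_1\geq 1$; ``carrying no rational homology'' is not a property they can have, and it is not what your induction needs. What it needs is that the \emph{closed} manifold $\widetilde{M}_0$ --- the universal abelian cover of the Dehn-filled manifold $M_0$ with diagram $\Gamma_0$ --- is itself a $\Q$HS. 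By Proposition \ref{rationalhomologyspheres} that amounts to three conditions: tree, genus zero, and nondegenerate intersection form. The first two do pass from $\tilde M$ to $\widetilde{M}_0$ (its pieces and its decomposition graph sit inside those of $\tilde M$), but nondegeneracy of the intersection form of $\widetilde{M}_0$ does not follow from that of $\tilde M$ by anything you say; establishing nondegeneracy from the splice conditions is exactly the hard content of Proposition \ref{intersectionform}, which the paper proves separately and only needs for the converse direction. So invoking the corollary inductively for $\Gamma_0$ either rests on an unproved claim or smuggles in the full theorem, which is circular.

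The repair is to weaken the inductive invariant, which is precisely what the paper does: it never inducts on $\Q$HS-ness of sub-covers, but instead proves that the splice-diagram conditions are \emph{equivalent} to the two graph-level conditions ``the decomposition graph of $\tilde M$ has no cycles'' and ``every piece has genus-zero base.'' These two conditions are manifestly inherited by the covers of the split-off pieces, because their decomposition graphs embed in that of $\tilde M$; since the two conditions are necessary for $\tilde M$ to be a $\Q$HS, the corollary follows without ever asserting that any sub-cover is a $\Q$HS. If you restate your induction with this weaker hypothesis, your local analysis goes through essentially as the paper's does. One smaller inaccuracy to fix along the way: the cycle obstruction is not controlled by the fiber-intersection number of Proposition \ref{edgedeterminantequation}; what matters is that the preimage of $T_e$ has $\overline{d}_0\overline{d}_1$ components and the pieces above the two sides are glued in a complete bipartite pattern, so a cycle appears exactly when both ideal generators on some edge exceed $1$ --- this is the paper's criterion, and Proposition \ref{idealgeneratorproperties} is what translates it into the coprimality statement at all nodes but one.
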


\begin{proof}
What we are going to show is that the condition on the splice diagram
given above is equivalent to the absence of cycles in the
decomposition graph (or a plumbing graph) of
the universal abelian cover $\widetilde{M}$, and all the
pieces of the decomposition having a base of genus $0$. The corollary then
follows by Proposition \ref{rationalhomologyspheres}. That the
decomposition graph must also have no cycles and bases of genus $0$
follows from the relation between plumbing graphs and decomposition
graph given in \cite{commensurability}.

We saw that, when we cut along an edge $e$ between nodes $v_0$ and
$v_1$ in the inductive
construction of $\widetilde{M}$ given in the proof of Theorem
6.3 in \cite{myarticle}, we took $d_0$ pieces above $v_0$ and glued to $d_1$
pieces above $v_1$, where $d_i$ is the ideal generator at $e$
associated to $v_i$. Each piece on the one side is glued exactly once
to each piece on the other side. 
Each of these pieces has a Seifert fibered piece
sitting above the corresponding $M_{v_i}$. If $d_0,d_1>1$
then a piece $v_{00}$  over $M_{v_0}$ is glued to a
piece $v_{10}$ sitting over $M_{v_1}$, then $v_{10}$ is glued
to a piece $v_{01}$ sitting over $M_{v_0}$, and $v_{01}$ is glued to
a piece $v_{11}$ sitting over $M_{v_1}$. Finally $v_{11}$ is glued
to $v_{00}$. We have now constructed a cycle in the decomposition
  graph of $\Delta(\widetilde{M})$ since each of the $v_{ij}$
  represent a vertex of $\Delta(\widetilde{M})$. If one of the
  $d_i$'s is 1, then we do not get cycles, since we will have only
  one piece above the appropriate end of $e$. 

$$\splicediag{8}{30}{
& \overtag\Circ {v_{00}}{8pt}\lineto[rr]\lineto[dr] &&
\overtag\Circ{v_{10}}{8pt} &\\  
&&\text{\phantom{H} }\lineto[dr]&&\\ 
& \overtag\Circ {v_{01}}{8pt}\lineto[rr]\lineto[uurr] &&
\overtag\Circ{v_{11}}{8pt} &\\
&&&&\\
  \Vdots&\overtag\Circ {v_0} {8pt}\lineto[ul]
  \lineto[dl]
  \lineto[rr]^(.5){e}&& \overtag\Circ{v_1}{8pt}
  \lineto[ur]
  \lineto[dr]&\Vdots\\
  &&&&\hbox to 0 pt{~,\hss} }$$

So we now proved that a cycle in the decomposition graph for
$\widetilde{M}$ occurs if an edge $e$ in the splice diagram has ideal
generators $d_0$ and $d_1$ (associated to each end), such that both
$d_0$ and $d_1$ are not equal to one.

Let $M_0$ and $M_1$ be graph manifolds with universal abelian covers
$\widetilde{M}_0$ and $\widetilde{M}_1$, and assume that there are no
cycles in $\widetilde{M}_i$. Let $\widetilde{M}_{01}$ be the
universal abelian cover of $M_{01}$ which is $M_0$ glued to $M_1$
after removing a solid torus from each. Assume that $\widetilde{M}_{01}$
has cycles in its decomposition graph. $\widetilde{M}_{01}$ is a
number of $\widetilde{M}_{0}$ with $n_0$ solid tori removed glued to
$\widetilde{M}_{1}$ with $n_1$ solid tori removed, such that each of
the first type is glued to each of the second type. If one of the
$n_i$ is $1$, then $\widetilde{M}_{01}$ has no cycles, so
$n_0,n_1>1$. But $n_i=d_i$ so we are in the situation above.

So there are cycles in the decomposition graph of $\widetilde{M}$ if
and only if there is an edge which has both associated ideal
generators different from $1$.

We need to show
that the conditions we stated on $\Gamma$ are equivalent to the
statement that for each edge one of the ideal generators associated to
an end of it is $1$.

Suppose there were two nodes $v$ and $w$ of
$\Gamma$, such that 
the edge weights at $v$ that do not see $w$ are not pairwise coprime,
and the same with $v$ and $w$ exchanged. On any edge $e$ on the string between
$v$ and $w$, the ideal generator associated to either end of $e$
is then greater than $1$ by Proposition
\ref{idealgeneratorproperties}, so we a have  cycle in the decomposition
graph. This implies that there can be at most be one node $v$, such that
at all other nodes, edge weights that do not see $v$ are pairwise
coprime. On the other hand, if $\Gamma$ satisfies this, then it is not
hard to see that all ideal generators that do not see $v$ are $1$,
since all the edge weight they see at a node are pairwise coprime. 

We have so far shown that there are no cycles in the decomposition graph
of $\widetilde{M}$ if and only if there is a special node $v$ such
that at all other nodes the edge weights that do not see $v$ are
pairwise coprime. Next we have to see that our condition on $\Gamma$
also gives that all the pieces of the decomposition have genus $0$. 

Remember that when we do the induction in the proof of Theorem
6.3 in \cite{myarticle} and cut along an edge $e$
between $v_0$ and $v_1$, for be any node $v'$ in $\Gamma$ not equal
to $v_0$ or $v_1$, the weight $r_{v'}(v_i)$ gets replaced by
$r_{v'}(v_i)/d_{v'}(v_i)$, where $v_i$ ($i=0$ or $1$) is the node not
in the same piece as $v'$ after cutting.
When we cut $\Gamma$ along its edges, we do it in the following
way. Always choose an edge $e$ to an end node $w$, that is not the special node
$v$ to cut along. Then after the cutting we get two new pieces. The
first corresponds to the end node $w$ and has a one node splice
diagram with as many edges as $w$ had in $\Gamma$, and the edges have
the same weights, except $r_w(v)$ is divided by $d_w(v)$. The
splice diagram of the other piece $\Gamma_e$ looks like $\Gamma$ with
the node $w$ replaced by a leaf, and no edge weight is changed since
all the $d_{v'}(w)=1$ for any node $v'$. We then find an end node
of $\Gamma_e$ which is not $v$ to cut along, and repeat
until we have cut along all the edges between nodes.    
 
We have now cut $\Gamma$ into a collection of one-node splice
diagrams. Each of these will contribute at least one Seifert fibered piece to
$\widetilde{M}$, (the same one-node splice diagram may of course
contribute with the same Seifert fibered piece of $\widetilde{M}$  more
than once). 
We distinguish the piece corresponding to our special node $v$. The
pieces not corresponding to $v$ have splice diagrams with the same
weights as in $\Gamma$, except $r_w(v)$ is replaced by
$r_w(v)/d_w(v)$. Our assumptions on the
$\Gamma$ then imply that all the weights are pairwise
coprime, except possibly two weights who are pairwise coprime with the rest,
but might have a common divisor. Since the Seifert fibered pieces
corresponding to each of the nodes are the Brieskorn complete
intersections defined by the edge weights, so condition one or
two of Proposition \ref{rationalhomologyspherebrieskorn} holds. Then the
Seifert fibered pieces of the decomposition of $\widetilde{M}$
corresponding to these nodes are rational homology spheres.

%If we did not have the assumption
%n the edge weights, i.e.\ if the edge weight that saw $v$ divided by
%he ideal generator had a common divisor with more than one of the
%ther say with weights $n,n'$, then we would not be in any of the
%ases of \ref{rationalhomologyspherebrieskorn} since $\gcd(n,n')=1$ so
%e could not be in case three. 

The special piece of the decomposition of $\widetilde{M}$
(corresponding to $v$, there will in fact only be one), has genus $0$,
since the assumption on $\Gamma$ are equivalent to the Brieskorn
complete intersection being genus $0$, by proposition
\ref{rationalhomologyspherebrieskorn}.  

Hence the assumptions on $\Gamma$ are equivalent to the decomposition
graph of $\widetilde{M}$ having no cycles, and all the pieces of the
decomposition having a base of genus $0$.
\end{proof}

The converse to the corollary does not immediately follow, since
having no cycles and having genus $0$ pieces are only two of the three
conditions for a graph 
manifold to be a rational homology sphere. The last one (as we saw in
proposition \ref{rationalhomologyspheres}) is that the intersection
matrix $I$ must have non zero determinant. Proving that $\det(I)\neq
0$, reduces to a simpler problem since Neumann showed in
\cite{commensurability} that, 
by doing row and column additions, $I$ becomes the direct sum of the
decomposition matrix and a number of $1\times 1$ matrices with non
zero entries. Hence it is enough to show that the determinant of the
decomposition matrix is non zero. To do this we need the following
lemma describing the fiber intersection numbers in the universal
abelian cover from the splice diagram.

\begin{prop}\label{fiberintersection}
Let $v_0$ and $v_1$ be two nodes of $\Gamma(M)$ connected by an edge
$e$, decorated 
as below. If there are no edge weights of $0$ adjacent to any of the
$v_i$'s, then the fiber intersection number $\tilde{p}$ in any torus
in the universal abelian cover sitting above $T_e$ is
\begin{align*}
\tilde{p}&=\frac{\num{D(e)}}{\overline{d}_0\overline{d}_1b_0b_1},
\end{align*}
where $\overline{d}_i$ are ideal generator corresponding to $r_i$ and
\begin{align*}
b_i&=\frac{r_i/\overline{d}_i\lcm(n_{i1}/\overline{d}_{i1},
  \dots,n_{ik_i}/\overline{d}_{ik_i})}{\lcm(n_{i1}/\overline{d}_{ii},
  \dots,n_{ik_i}/\overline{d}_{ik_i},r_i/\overline{d}_i)},
\end{align*}  
again the $\overline{d}_{ij}$ are the ideal generators.
\end{prop}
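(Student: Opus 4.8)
The plan is to compute $\tilde p$ by comparing the two Seifert fibrations on a torus $\tilde T_e$ lying over $T_e$ with the two fibrations on $T_e$ itself, and then to feed in the value of $p$ supplied by the Edge Determinant Equation (Proposition \ref{edgedeterminantequation}). First I would record the purely torus-theoretic transformation law for fiber intersection numbers under a finite covering of tori. Write $T_e=\R^2/\Z^2$ with the two fibers $f_0,f_1\in\Z^2$ primitive and $\num{f_0\wedge f_1}=p$. A connected component $\tilde T_e$ of the preimage of $T_e$ in $\widetilde M$ is $\R^2/\Lambda$, where $\Lambda=\Ker(H_1(T_e)\to H_1(M))$ has index $N_e=\num{\im(H_1(T_e)\to H_1(M))}$. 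The lift of $f_i$ to a single fiber upstairs is the primitive vector $a_if_i\in\Lambda$, where $a_i=\operatorname{ord}_{H_1(M)}(f_i)$ is the order of the class of $f_i$. Since the intersection number of $u,v\in\Lambda$ in $\R^2/\Lambda$ is $(u\wedge v)/N_e$, I obtain
\begin{align}\label{coverlaw}
\tilde p=\frac{a_0a_1}{N_e}\,p=\frac{a_0a_1}{N_e}\cdot\frac{\num{D(e)}}{\num{H_1(M)}},
\end{align}
the last equality by Proposition \ref{edgedeterminantequation}.

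Comparing \eqref{coverlaw} with the claimed formula, and first simplifying the stated expression for $b_i$ to the closed form $b_i=\gcd\big(\lcm_j(n_{ij}/\overline{d}_{ij}),\,r_i/\overline{d}_i\big)$ via the identity $\lcm(L,R)\gcd(L,R)=LR$ with $L=\lcm_j(n_{ij}/\overline{d}_{ij})$ and $R=r_i/\overline{d}_i$, the proposition reduces to the single arithmetic identity
\begin{align}\label{target}
N_e\,\num{H_1(M)}=a_0a_1\,\overline{d}_0\,\overline{d}_1\,b_0\,b_1.
\end{align}
To establish \eqref{target} I would compute the three homological quantities $a_0,a_1,N_e$ from the splice-diagram data, using Theorem \ref{idealcon} to interpret the ideal generators as orders of relative homology groups. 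The fiber order $a_i$ I would extract by a Mayer--Vietoris computation across $T_e$: filling the far side of $T_e$ by the solid torus used to define $M_{v_ie}$ relates $\operatorname{ord}(f_i)$ to $H_1(M_{v_ie})$ and to $\num{H_1(M_{v_ie},K)}=\overline{d}_i$, while inside the Seifert piece over $v_i$ the order of the regular fiber relative to the exceptional fibers is governed by $\lcm_j(n_{ij}/\overline{d}_{ij})$. The factor $b_i$ then enters as the correction measuring the overlap between the regular-fiber direction and the edge-$e$ direction $r_i/\overline{d}_i$. For $N_e$ I would identify $\im(H_1(T_e)\to H_1(M))$ with the subgroup generated by $f_0$ and a complementary section curve and read off its order from the same sequence.

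The main obstacle is precisely the verification of \eqref{target}, that is, pinning down $a_0,a_1,N_e$ and checking that the bookkeeping collapses to $\overline{d}_0\overline{d}_1b_0b_1$. The delicate point is the factor $b_i$: a naive count would treat the regular-fiber order $\lcm_j(n_{ij}/\overline{d}_{ij})$ and the edge contribution $r_i/\overline{d}_i$ as independent, but these two directions are \emph{not} independent inside $H_1$ of the piece, and their interaction is exactly the $\gcd$ that defines $b_i$. Controlling this interaction will require the divisibility relations among edge weights and ideal generators from Proposition \ref{idealgeneratorproperties}, together with the description of the covering pieces of $\widetilde M$ from the inductive construction in Theorem 6.3 of \cite{myarticle}, which records how each weight $n_{ij}$ is replaced by $n_{ij}/\overline{d}_{ij}$ as one cuts along edges (this is the source of every term appearing in the formula). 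Once the covering degrees $a_i$ and $N_e$ are identified, \eqref{target} becomes a finite $\gcd$/$\lcm$ identity that should follow by elementary manipulation.
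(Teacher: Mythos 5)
Your covering-space reduction is sound: the transformation law $\tilde p=a_0a_1p/N_e$ is correct, and it is in fact equivalent to the paper's own opening moves (the paper phrases the same relation as $\deg(\pi\vert_{\widetilde T})=\num{H_1(M)}/t$, with $t$ the number of components of $\pi\inv(T_e)$, combined with bilinearity of the intersection product). The genuine gap is that your target identity $N_e\num{H_1(M)}=a_0a_1\overline{d}_0\overline{d}_1b_0b_1$ is not a piece of bookkeeping to be dispatched afterwards by ``elementary gcd/lcm manipulation'': it \emph{is} the proposition, and you never prove it. Unwinding it, what is needed is exactly (i) that $\pi\inv(T_e)$ has precisely $\overline{d}_0\overline{d}_1$ components, i.e.\ $N_e=\num{H_1(M)}/\overline{d}_0\overline{d}_1$, and (ii) that the number of Seifert fibers of $\widetilde T$ lying over $f_i$ is precisely $b_i$, equivalently $a_i=N_e/b_i$. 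Granted (i) and (ii), your identity is a one-line computation; without them nothing has been established. Your own text concedes that pinning down $a_0,a_1,N_e$ is ``the main obstacle,'' and the route you sketch for it (a Mayer--Vietoris argument across $T_e$ plus Theorem \ref{idealcon}) is never carried out.

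Moreover, the inputs that actually make (i) and (ii) computable are topological, not arithmetic, and your plan does not identify them. The paper obtains (i) from the inductive construction of the universal abelian cover in the proof of Theorem 6.3 of \cite{myarticle}, where the components over $T_e$ are produced explicitly and counted by $\overline{d}_0\overline{d}_1$. It obtains (ii) by identifying the Seifert piece of $\widetilde M$ over $v_i$ with the Brieskorn complete intersection $\Sigma=\Sigma(n_{i1}/\overline{d}_{i1},\dots,n_{ik_i}/\overline{d}_{ik_i},r_i/\overline{d}_i)$ and then combining three facts: a single fiber of $\Sigma$ covers $f_i$ with degree $\lcm(n_{i1}/\overline{d}_{i1},\dots,n_{ik_i}/\overline{d}_{ik_i},r_i/\overline{d}_i)\cdot\num{e(\Sigma)}$, where $e(\Sigma)$ is the rational Euler number (proof of Theorem 8.2 of \cite{brandies}); the universal abelian cover of $\Sigma$ has total degree $\num{e(\Sigma)}\,(r_i/\overline{d}_i)\prod_j(n_{ij}/\overline{d}_{ij})$, so dividing counts the fibers over $f_i$; and these fibers are distributed equally, by a symmetry argument, among the $(\prod_j n_{ij}/\overline{d}_{ij})/\lcm_j(n_{ij}/\overline{d}_{ij})$ boundary tori lying over $T_e$ on the $v_i$ side, which yields exactly $b_i$ per torus. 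That chain of identifications is where $b_i$ comes from; a Mayer--Vietoris computation of orders in $H_1(M)$ would additionally have to reconcile the order of $[f_i]$ in $H_1(M)$ with these Seifert data, which is not addressed. As it stands, your argument is a correct reformulation of the statement rather than a proof of it.
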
 
$$\splicediag{8}{30}{
  &&&&\\
  \Vdots&\overtag\Circ {v_0} {8pt}\lineto[ul]_(.5){n_{01}}
  \lineto[dl]^(.5){n_{0k_0}}
  \lineto[rr]^(.25){r_0}^(.75){r_1}&& \overtag\Circ{v_1}{8pt}
  \lineto[ur]^(.5){n_{11}}
  \lineto[dr]_(.5){n_{1k_1}}&\Vdots\\
  &&&&\hbox to 0 pt{~.\hss} }$$ 
\begin{proof}
Let $f_0$ and $f_1$ be fibers from each of the sides in $T_e$, and let
$p$ be the fiber intersection number in $T_e$ i.e.\ $p=f_0\cdot f_1$.
It follows from the Edge Determinant Equation
\ref{edgedeterminantequation} that $p=\num{D(e)}/\num{H_1(M)}$. Let
$\morf{\pi}{\widetilde{M}}{M}$ be the universal abelian cover, and let
$\widetilde{T}\widetilde{M}$ be a connected component of
$\pi\inv(T_e)$. Then the intersection number of the preimage of $\pi$
restricted to $\widetilde{T}$ is the intersection number
  before multiplied by the degree of the map restricted map i.e.\
  $\pi\vert_{\widetilde{T}}\inv(f_0)\cdot\pi\vert_{\widetilde{T}}\inv(f_1) =
  p\deg(\pi\vert_{\widetilde{T}})$. Since the $\pi$ is the
  universal abelian cover its degree is $\num{H_1(M)}$ and hence
  $\deg(\pi\vert_{\widetilde{T}})=\num{H_1(M)}/t$ where $t$ is the number
  of components of $\pi\inv(T_e)$, and using the
  edge determinant equation we get that
  $\pi\vert_{\widetilde{T}}\inv(f_0)\cdot\pi\vert_{\widetilde{T}}\inv(f_1) 
  =\num{D(e)}/t$. Notice that $t=\overline{d}_0\overline{d}_1$, this
  follows from the proof of Theorem 6.3 
in \cite{myarticle} and was also used in the proof Corollary
\ref{ratinalhomunabcover}. 

Now $\pi\vert_{\widetilde{T}}\inv(f_i)$ consist of a collection of
fibers $\tilde{f}_i$, and hence using the biliniarity of the
intersection product we get that
$\num{D(e)}/t=(\#\pi\vert_{\widetilde{T}}\inv(f_0))
(\#\pi\vert_{\widetilde{T}}\inv(f_1))\tilde{f}_0\cdot\tilde{f}_1$. Since
$\tilde{f}_0\cdot\tilde{f}_1=\tilde{p}$ we just need to calculate
$\#\pi\vert_{\widetilde{T}}\inv(f_i)$. 

Let $\widetilde{M}_i\subset \widetilde{M}-\pi\inv(T_e)$ be a connected
component sitting above
$v_i$. Hence need need to determine how many
copies of $\tilde{f}_i$ sits in each of the boundaries of
$\widetilde{M}_i$. Remember that the Seifert fibered piece of
$\widetilde{M}_i$ sitting above $v_i$ is the Brieskorn complete
intersection $\Sigma=\Sigma(n_{i1}/\overline{d}_{i1},
\dots,n_{ik_i}\overline{d}_{ik_i},r_i/\overline{d}_i)$ where
a tubular neighborhood around all the singular fibers $o_i$ corresponding
to $r_i/\overline{d}_i$ are removed. $\tilde{f}_i$ is a non singular
fiber of $\Sigma$, and hence
by the proof of Theorem 8.2 in \cite{brandies}
$\morf{\pi\vert_{\tilde{f}_i}}{\tilde{f}_i}{f_i}$ has degree
$a_i\num{e}$, where $e$ is
the rational euler number of $\Sigma$ and
$a_i=\lcm(n_{i1}/\overline{d}_{i1},\dots,n_{ik_i}\overline{d}_{ik_i}
,r_i/\overline{d}_i)$. Since
$\pi$ restricted to the Seifert fibered piece above $v_i$ is the the
same as the restriction of the universal abelian of
$\Sigma$ its degree is
$\num{e}r_i/\overline{d}_i\prod_jn_{ij}/\overline{d}_{ij}$, and
hence there are
$\frac{r_i/\overline{d}_i\prod_jn_{ij}/\overline{d}_{ij}}{a_i}$ copies
of $\tilde{f}_i$ in $\Sigma$. These $\tilde{f}_i$ all sit in the
boundaries when we remove the tubular neighborhoods of the fibers
sitting above $o_i$, and by symmetry each of the boundary components of
of $\widetilde{M}_i$ has an equal number of copies. Since the number
of fibers above $o_i$ is
$(\prod_jn_{ij}/\overline{d}_{ij})/\lcm(n_{ij}/\overline{d}_{ij})$,
and we get that
\begin{align*}
\#\pi\vert_{\widetilde{T}}\inv(f_i)
&=\frac{r_i/\overline{d}_i\lcm(n_{i1}/\overline{d}_{i1},  
  \dots,n_{ik_i}/\overline{d}_{ik_i})}{\lcm(n_{i1}/\overline{d}_{ii},
  \dots,n_{ik_i}/\overline{d}_{ik_i},r_i/\overline{d}_i)},
\end{align*}  
and the formula follows.
\end{proof}

\begin{prop}\label{intersectionform}
Let $M$ be a graph orbifold whose splice diagram $\Gamma(M)$ satisfies the
conditions of Corollary \ref{ratinalhomunabcover}, then the
intersection form of the universal abelian cover $\widetilde{M}$ of
$M$ is non degenerate.
\end{prop}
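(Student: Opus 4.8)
The plan is to follow the reduction indicated just before the statement. By Neumann's row-and-column reduction of the intersection matrix \cite{commensurability}, the form of $\widetilde{M}$ is non-degenerate if and only if the decomposition matrix $A$ of $\widetilde{M}$ has nonzero determinant, so it suffices to prove $\det A\neq 0$. The matrix $A$ is symmetric, its rows and columns are indexed by the Seifert fibered pieces of $\widetilde{M}$ (the nodes of the decomposition graph, which by Corollary \ref{ratinalhomunabcover} form a tree), its diagonal entries are the rational Euler numbers $e_v$ of the pieces, and its off-diagonal entry between two adjacent pieces is $\pm 1/\tilde{p}_e$, where $\tilde{p}_e$ is the fiber intersection number computed in Proposition \ref{fiberintersection}.

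First I would record the two non-vanishing inputs that feed the computation. For the off-diagonal entries: the edges $e$ between nodes of $\Gamma(M)$ are genuine JSJ tori of the $\Q$HS $M$, so the fiber intersection number $p$ in $M$ is nonzero, and hence $D(e)\neq 0$ by the Edge Determinant Equation \ref{edgedeterminantequation}; Proposition \ref{fiberintersection} then writes $\tilde{p}_e=\num{D(e)}/(\overline{d}_0\overline{d}_1 b_0 b_1)$ as $\num{D(e)}$ divided by a product of positive integers, so $\tilde{p}_e\neq 0$. For the diagonal entries: by Corollary \ref{ratinalhomunabcover} every piece of $\widetilde{M}$ has genus $0$ and edge weights satisfying one of the conditions of Proposition \ref{rationalhomologyspherebrieskorn}, so each piece is a Brieskorn complete intersection that is a $\Q$HS, and therefore has nonzero rational Euler number $e_v$.

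With these in hand I would compute $\det A$ by induction on the number of nodes, pruning the tree exactly as in the cutting procedure of Corollary \ref{ratinalhomunabcover}: choose an end node $w$ joined to the rest along an edge $e$ whose ideal generator on the $\Gamma_e$ side is $1$, expand the determinant along the rows of the pieces lying over $w$, and express $\det A$ through the smaller decomposition matrix $A_e$ of the cover of $\Gamma_e$ together with the entries $e_w$ and $1/\tilde{p}_e$. Since cutting at $w$ introduces no cycle, the pieces over $w$ are glued in a controlled, acyclic way, and the cofactor expansion produces a relation for $\det A$ in terms of $\det A_e$, $e_w$, and $\tilde{p}_e$, in which both $e_w$ and $\tilde{p}_e$ enter as nonzero quantities; read through Mayer--Vietoris this is the statement that gluing the genus-$0$ Seifert $\Q$HS pieces over $w$ onto the cover of $\Gamma_e$ along tori with $\tilde{p}_e\neq 0$ keeps the rational first homology finite.

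The main obstacle is the non-vanishing at the inductive step in the indefinite case. When $M$ is not a singularity link its splice diagram carries negative signs, so $A$ is indefinite and one cannot deduce $\det A\neq 0$ from definiteness, as one can for singularity links; instead one must rule out resonant cancellation between the diagonal Euler-number terms and the off-diagonal $1/\tilde{p}_e$ terms. I expect to control this with the pairwise-coprimality hypotheses of Corollary \ref{ratinalhomunabcover}, which, via Proposition \ref{idealgeneratorproperties} and the explicit formulas for $e_w$ and for $\tilde{p}_e$ in Proposition \ref{fiberintersection}, pin down the denominators appearing in the cofactor relation tightly enough that the two groups of terms cannot sum to zero. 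Establishing this arithmetic incompatibility is the crux of the argument, and it is precisely the point at which the computation of $\tilde{p}_e$ in Proposition \ref{fiberintersection} is indispensable.
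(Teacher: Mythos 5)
Your skeleton is the same as the paper's: reduce via Neumann's row-and-column operations to the decomposition matrix, induct on the number of nodes, prune an end node, and eliminate its block by row/column operations. But the step you yourself flag as ``the crux'' --- why the reduced determinant cannot vanish --- is exactly what is missing, and the mechanism you propose for it (an arithmetic incompatibility of denominators forced by coprimality) is not the one that works, nor is it clear it could work: non-vanishing of the determinant of an indefinite matrix is a global property, and no control on the denominators of individual entries will by itself rule out singularity of the whole reduced matrix. Worse, the cancellation you are trying to exclude actually \emph{does} occur, and it is the engine of the proof rather than an obstacle. After clearing the off-diagonal entries $1/\tilde{p}$ using the rows and columns of the $\overline{d}_r$ pieces over the pruned node $v$, the surviving diagonal entry is $e_{\tilde{w}}-\tfrac{\overline{d}_r}{\overline{d}_m}\tfrac{1}{\tilde{p}^2e_{\tilde{v}}}$; substituting $\tilde{p}=\num{D(e)}/(bc\overline{d}_m)$ from Proposition \ref{fiberintersection} together with $e_{\tilde{v}}=-\epsilon_vsb^2\overline{d}_r/(ND(e))$ and $e_{\tilde{w}}=-c^2\overline{d}_m\bigl(\epsilon_wm'/(MD(m))+\epsilon_vN/(sD(e))+E\bigr)$, the term $\epsilon_vN/(sD(e))$ cancels exactly, leaving $-c^2\overline{d}_m\bigl(\epsilon_wm'/(MD(m))+E\bigr)$.

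The key idea of the paper, absent from your proposal, is to recognize this quantity as precisely the rational Euler number of the corresponding Seifert piece in the universal abelian cover of a \emph{smaller manifold} $M'$, whose splice diagram $\Gamma(M')$ is $\Gamma(M)$ with the pruned node replaced by a leaf (keeping the weight $s$ on the edge at $w$). Since all ideal generators seeing the pruned node are $1$, every other entry is unchanged, so the reduced matrix $A'$ is itself the decomposition matrix of the universal abelian cover of $M'$; and $\Gamma(M')$ still satisfies the conditions of Corollary \ref{ratinalhomunabcover}, so the induction hypothesis applies to $A'$ directly, while the split-off $1\times 1$ blocks $(e_{\tilde{v}})$ are nonzero. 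Without this identification your induction has nothing to apply to: the matrix produced by your cofactor expansion is not a priori the decomposition matrix of any manifold satisfying the hypotheses, so the inductive hypothesis cannot be invoked, and the ``arithmetic incompatibility'' you hope to establish is left entirely unproved.
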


\begin{proof}
Remember from the earlier discussion that we only need to show that
the decomposition matrix is non degenerate. The decomposition matrix
has as diagonal entries the rational euler number of the pieces of the
JSJ-decomposition, and on off diagonal entries is $1/p$ where $p$ is
the fiber intersection number if the corresponding pieces are
connected by an edge. 

It proof is going to be by induction by the number of nodes in
$\Gamma(M)$. If $\Gamma(M)$ only has one node, then $\widetilde{M}$ is
a Brieskorn complete intersection since we have no weights of value
$0$, and hence its intersection matrix is negative definite and
therefore non degenerate.

So let $\Gamma(M)$ have $n$ nodes. Let $v$ be an end node other that
the special node, that means that $v$ is only connected to one other
node, call this node $w$ and let $v'$ be the special node, $v'$ can be
equal to $w$. Assume we have named the weights in the following way
$$\splicediag{8}{30}{
&&&&&\\
&&&&&\Vdots\\
\Circ&&&&&\\
  \Vdots&\overtag\Circ {v} {8pt}\lineto[ul]_(.5){n_1}
  \lineto[dl]^(.5){n_k}
  \lineto[rr]^(.25){r}^(.75){s}&& \overtag\Circ{w}{8pt}
  \lineto[uuurr]^(.5){m_1}\lineto[urr]^(.5){m_l}
\lineto[dddrr]_(.5){r_1}\lineto[drr]_(.5){r_{l'}}
  \lineto[rr]^(.5){m}&&\\
\Circ&&&&&\Circ\\ 
&&&&&\Vdots\\ 
&&&&&\Circ
\hbox to 0 pt{~,\hss} }$$
 where the edges weighted with $m_i$ and $m$ leads to other nodes, and
 if $w\neq v'$ then the edge with $m$ one sees $v'$. Let
 $N=\prod_in_i$, $M=\prod_im_i$ and $R=\prod_ir_i$.
The conditions on
 $\Gamma(M)$ implies that all the ideal generators
 except maybe $\overline{d}_r$ and $\overline{d}_m$ are $1$, and that
 $\gcd(n_i,n_j)=1$ and
 $\gcd(r_i,r_j)=\gcd(m_i,m_j)=\gcd(r_i,m_j)=\gcd(s,r_j)=\gcd(s,m_j)=1$.
 $\gcd(r/\overline{d}_r,n_i)=1$  
   except maybe for one of the $n_i$ call this $n_{i_0}$, and assume
   $\gcd(r/\overline{d}_r,n_{i_0})=b$, likewise
     $\gcd(m/\overline{d}_m,m_i)=\gcd(m/\overline{d}_m,r_i)
         =\gcd(m/\overline{d}_m,s)= 1$ except maybe for one of the
           $m_i$'s $r_i$'s or $s$. Let the value of the $\gcd$ not
           being $1$ be $c$, and notice that if
           $\gcd(m/\overline{d}_m,s)= c$ then
             $\overline{d}_r=\overline{d}_m$ els
             $\overline{d}_r=c\overline{d}_m$.  

Above $v$ in $\widetilde{M}$ sits $\overline{d}_r$ identical Seifert fibered
pieces $\tilde{v}$, and above $w$ sits $\overline{d}_m$ identical
Seifert fibered 
pieces $\tilde{w}$. Each of the $\tilde{w}$ is connected two
$\overline{d}_r/\overline{d}_m$ of the $\tilde{v}$'s, by an edge. This
implies that in the decomposition matrix $A$ has $\overline{d}_m$
blocks looking like
\begin{align*}
&\begin{pmatrix}
&&\vdots&\vdots&&\vdots& & &\\
&&0&0&\dots&0& & &\\
\dots&0&e_{\tilde{v}}&0&\dots&0&\frac{1}{\tilde{p}}&0&\dots\\
\dots&0&0&e_{\tilde{v}} & &0&\frac{1}{\tilde{p}}&0&\dots \\
&\vdots&\vdots& &\ddots&\vdots&\vdots&\vdots& \\
\dots&0&0&0&\dots&e_{\tilde{v}}&\frac{1}{\tilde{p}}&0&\dots\\
&&\frac{1}{\tilde{p}}&\frac{1}{\tilde{p}} &\dots &\frac{1}{\tilde{p}}
&e_{\tilde{w}}&\frac{1}{\tilde{p}'}& \\ 
&&0&0&\dots&0&\frac{1}{\tilde{p}'}&\ddots&\\
&&\vdots&\vdots&&\vdots&&&\\
\end{pmatrix}\\
&\hspace{1.6cm}\underbrace{\hspace{2.7cm}}_{\overline{d}_r/\overline{d}_m}
\end{align*}
where $e_{\tilde{v}}$ and  $e_{\tilde{w}}$ are the rational euler
numbers of $\tilde{v}$ and $\tilde{w}$, and $\tilde{p}$ is the fiber
intersection number in the edges. We can calculate $\tilde{p}$ using
\ref{fiberintersection} and gets that
$\tilde{p}=\num{D(e)}/bc\overline{d}_m$, the reason that it is
$\overline{d}_m$ and not $\overline{d}_r$ in the formula, is that
using $\overline{d}_r$ gives two different formulas depending on
whether $\gcd(m/\overline{d}_m,s)=c$ or not, but using the relation
ship between $\overline{d}_m$ and $\overline{d}_r$ to replace
$\overline{d}_m$ with $\overline{d}_r$ makes the formulas the same. To
calculate $e_{\tilde{v}}$ and  $e_{\tilde{w}}$ we use the formula
given in the end of the proof of 6.3 in \cite{myarticle}, which gives
that
$e_{\tilde{v}}=\tfrac{\lambda_v^2}{\overline{d}_r}e_v/\num{H_1(M)}$ and
$e_{\tilde{w}}=\tfrac{\lambda_w^2}{\overline{d}_s}e_w/\num{H_1(M)}$,
where
$\lambda_v=Nr/\lcm(n_1,\dots,n_k,r/\overline{d}_s)=b\overline{d}_r$
and $\lambda_m=MRsm/\lcm(m_1,\dots,m_l,r_1,
\dots,r_{l'},s,m/\overline{d}_m)=c\overline{d}_m$. We find
$e_v/\num{H_1(M)}$ and $e_w/\num{H_1(M)}$ by using the formula of
Proposition 3.4 in \cite{myarticle}. This gives that 
\begin{align*}
e_v/\num{H_1(M)}&=-\frac{\epsilon_vs}{ND(e)}, \ 
&e_w/\num{H_1(M)}=-\frac{\epsilon_wm'}{MD(m)}-\frac{\epsilon_vN}{sD(e)}-E,
\end{align*}  
where $\epsilon_v$ and $\epsilon_w$ are the signs at the nodes, $D(m)$
is the edge determinant of the edge with $m$ on it $m'$ is the weight
on the other end of that edge, and $E$ is a sum of contributions from
the nodes seen be the $r_i$'s which dose not include any factors
coming from $v$. This give the following values for $e_{\tilde{v}}$
and $e_{\tilde{w}}$
\begin{align*}
e_{\tilde{v}}&=-\frac{\epsilon_vsb^2\overline{d}_r}{ND(e)}, \ 
&e_{\tilde{w}}=-c^2\overline{d}_m
(\frac{\epsilon_wm'}{MD(m)}+\frac{\epsilon_vN}{sD(e)}+E).
\end{align*}  

We can clear all the $1/\tilde{p}$ in the row and column containing
$e_{\tilde{w}}$ by using the rows and columns with the $e_{\tilde{v}}$
on the diagonal whit out changing anyting other that the entry with
$e_{\tilde{w}}$, hence our blocks will now look like
\begin{align*}
\begin{pmatrix}
&&\vdots&\vdots&&\vdots& & &\\
&&0&0&\dots&0& & &\\
\dots&0&e_{\tilde{v}}&0&\dots&0&0&0&\dots\\
\dots&0&0&e_{\tilde{v}} & &0&0                  &0&\dots \\
\dots&0&\vdots& &\ddots&\vdots&\vdots&\vdots& \\
\dots&0&0&0&\dots&e_{\tilde{v}}& 0           &0&\dots\\
&&0&0&\dots &0
&e_{\tilde{w}}-\tfrac{\overline{d}_r}{\overline{d}_m}
\frac{1}{\tilde{p}^2e_{\tilde{v}}}  
&\frac{1}{\tilde{p}'}& \\ 
&&0&0&\dots&0&\frac{1}{\tilde{p}'}&\ddots&\\
&&\vdots&\vdots&&\vdots&&&\\
\end{pmatrix}.
\end{align*}
This implies that
$A$ is row and column equivalent to
$A'\oplus\Big(\bigoplus_{i=1}^{\overline{d}_r}(e_{\tilde{v}})\Big)$, 
where $A'$ is equal to $A$, except the block has been replaced be a
single entry of $e_{\tilde{w}}-\tfrac{\overline{d}_r}{\overline{d}_m}
\frac{1}{\tilde{p}^2e_{\tilde{v}}}$. Since the $1\times 1$ matrix
$(e_{\tilde{v}})$ has a non zero entry, $A$ is non degenerate if and
only if $A'$ is non degenerate. So lets calculate the difference
between $A$ and $A'$
\begin{align*}
e_{\tilde{w}}-\tfrac{\overline{d}_r}{\overline{d}_m}
\frac{1}{\tilde{p}^2e_{\tilde{v}}} &= -c^2\overline{d}_m
(\frac{\epsilon_wm'}{MD(m)}+\frac{\epsilon_vN}{sD(e)}+E)
+\tfrac{\overline{d}_r}{\overline{d}_m}
\frac{c^2\overline{d}_m^2N\epsilon_v}{sD(e)\overline{d}_r} \\ &=
-c^2\overline{d}_m
(\frac{\epsilon_wm'}{MD(m)}+\frac{\epsilon_vN}{sD(e)}+E)
+c^2\overline{d}_m\frac{\epsilon_vN}{sD(e)} 
\\ &== -c^2\overline{d}_m
(\frac{\epsilon_wm'}{MD(m)}+E).
\end{align*}      
But this is excatly the rational euler number of the seifert fibered
pieces in the universal abelian cover of the manifold $M'$ with splice
diagram $\Gamma(M')$ sitting above the node $w$, where 
$$\splicediag{8}{30}{
&&&&\\
&&&&\vdots\\
&&&&\\
  \Gamma(M')=&\Circ\lineto[r]^(.75){s}& \overtag\Circ{w}{8pt}
  \lineto[uuurr]^(.5){m_1}\lineto[urr]^(.5){m_l}
\lineto[dddrr]_(.5){r_1}\lineto[drr]_(.5){r_{l'}}
  \lineto[rr]^(.5){m}&&\\
&&&&\Circ\\ 
&&&&\Vdots\\ 
&&&&\Circ
\hbox to 0 pt{~,\hss} }$$ 
the rest of $\Gamma(M')$ is identical to $\Gamma(M)$. It is not hard
to see that $\Gamma(M')$ satisfy the conditions of  Corollary
\ref{ratinalhomunabcover}. Since all the ideal generators in
$\Gamma(M)$ that sees $v$ are $1$, all entries in the decomposition
matrix of the universal ablian cover of $M'$ are the same as in the
universal abeliancover of $M$ except the one above $v$ and $w$, and
hence $A'$ is the decomposition matrix of the universal abelian cover
of $M'$. This implies that $A'$ is non degenerate by the induction
hypothesis, and 
hence $A$ is non degenerate and the intersection form of the universal
abelian cover of $M$ is non degenerate.   
\end{proof}

We can now summerize the above proposition and Corollary
\ref{ratinalhomunabcover} to the following result.

\begin{thm}
Let $\Gamma$ be the the splice diagram of a manifold $M$, then the
universal abelian cover of $M$ is a rational homology sphere if and
only if all edge weights are nonzero, and
there is a special node $v\in\Gamma$, with the following
properties. For all other nodes $v'\in\Gamma$, the weights other than
$r_{v'}(v)$ are pairwise coprime, and at most one of these
edge weights is not coprime with $r_{v'}(v)/d_{v'}(v)$. At $v$ all the
edge weights satisfy one of the conditions from Proposition
\ref{rationalhomologyspherebrieskorn}.  
\end{thm}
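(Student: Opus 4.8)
The plan is to recognize this statement as the synthesis of the three preceding results and to assemble them via the criterion of Proposition \ref{rationalhomologyspheres}. By that proposition, the universal abelian cover $\widetilde{M}$ is a $\Q$HS if and only if its plumbing diagram is a tree of spheres and its intersection form is non-degenerate. So the whole theorem reduces to verifying these two properties from the combinatorial conditions on $\Gamma$, and conversely to extracting those conditions from them. For the ``only if'' direction I would invoke Corollary \ref{ratinalhomunabcover} verbatim: it states precisely that whenever $\widetilde{M}$ is a $\Q$HS, all edge weights of $\Gamma$ are nonzero and the special-node condition holds. Thus necessity is already in hand and requires no further argument.

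For the ``if'' direction I would assume the conditions on $\Gamma$ and check the two hypotheses of Proposition \ref{rationalhomologyspheres} separately. First I would recall that the proof of Corollary \ref{ratinalhomunabcover} shows these conditions to be \emph{equivalent} to the decomposition graph of $\widetilde{M}$ having no cycles and to every piece of the decomposition having base of genus $0$. Translating through the correspondence between plumbing graphs and decomposition graphs of \cite{commensurability}, ``no cycles'' says the plumbing diagram is a tree and ``genus $0$ base'' says it is a tree \emph{of spheres}; this supplies the first hypothesis of Proposition \ref{rationalhomologyspheres}. The second hypothesis, non-degeneracy of the intersection form, is exactly the conclusion of Proposition \ref{intersectionform}, whose hypotheses are literally the conditions of Corollary \ref{ratinalhomunabcover} (and which applies here since a manifold is in particular a graph orbifold). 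With both hypotheses verified, Proposition \ref{rationalhomologyspheres} yields that $\widetilde{M}$ is a $\Q$HS.

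The point is that no genuinely new argument is needed: the substantive content lives in the inductive non-degeneracy computation of Proposition \ref{intersectionform} and in the cycle-and-genus analysis carried out inside the proof of Corollary \ref{ratinalhomunabcover}. The only care required is to keep the two bookkeeping languages consistent --- the decomposition-graph language in which the earlier results are phrased and the plumbing-graph/tree-of-spheres language in which Proposition \ref{rationalhomologyspheres} is stated --- and to confirm that the special node is handled by the genus-$0$ characterization of Proposition \ref{rationalhomologyspherebrieskorn} rather than by mere pairwise coprimeness. This is the one place where the special node $v$ is treated differently from the others: at $v$ the edge weights need only satisfy one of conditions (1), (2), (3), so that the corresponding Brieskorn piece has genus $0$, whereas at every other node $v'$ the stronger coprimeness of the weights distinct from $r_{v'}(v)$ is what forces the relevant ideal generators to equal $1$ and thereby kills the cycles. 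I do not anticipate a serious obstacle in this direction; the argument is essentially a careful citation of the earlier propositions, and the main subtlety is simply ensuring that the special-node case invokes the full genus-$0$ statement of Proposition \ref{rationalhomologyspherebrieskorn}.
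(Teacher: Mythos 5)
Your proposal is correct and follows exactly the paper's own route: the paper proves this theorem simply by combining Corollary \ref{ratinalhomunabcover} (necessity, together with the equivalence, established in its proof, between the splice diagram conditions and the no-cycles and genus-$0$ properties) with Proposition \ref{intersectionform} for non-degeneracy, and then applying Proposition \ref{rationalhomologyspheres}. In fact your write-up spells out the bookkeeping more explicitly than the paper, which states the theorem as a one-line summary of these earlier results with no further argument.
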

%\begin{cor}
%If $M$ is a rational homology sphere singularity link, then the
%universal abelian cover of $M$ is a rational homology sphere if and
%only if the splice diagram $\Gamma$ satisfies the conditions of
%corollary \ref{ratinalhomunabcover}.
%\end{cor}      

%\begin{proof}
%We only have to prove that
%the intersection matrix $I$, satisfies that $\det(I)\neq 0$. Note that the
%universal abelian cover of a singularity link is also a singularity
%link, and hence by Theorem \ref{grauertsthm}, $I$ is negative definite.
%Therefore $\det(I)\neq 0$.
%\end{proof}

\newpage

\bibliography{rationalabeliancover}

\end{document}